\newcommand\Z{{\mathbb Z}}
\newtheorem{theorem}{Theorem}[section]
\newtheorem{lemma}[theorem]{Lemma}
\newtheorem{proposition}[theorem]{Proposition}
\newtheorem{definition}[theorem]{Definition}
\newtheorem{example}[theorem]{Example}
\newtheorem{remark}[theorem]{Remark}
\begin{document}

\title{Braided Frobenius Algebras from Certain Hopf Algebras}

\author{Masahico Saito} 
\address{Department of Mathematics, 
	University of South Florida, Tampa, FL 33620, U.S.A.} 
\email{saito@usf.edu} 

\author{Emanuele Zappala} 
	\address{Institute of Mathematics and Statistics, University of Tartu\\
	Narva mnt 18, 51009 Tartu, Estonia} 
\email{emanuele.amedeo.zappala@ut.ee \\ zae@usf.edu}

\begin{abstract}
A braided Frobenius algebra is a Frobenius algebra with braiding that commutes with 
the  operations, that are related to diagrams of compact surfaces with boundary expressed as ribbon graphs. A heap is a ternary operation exemplified by a group with the operation
$(x,y,z) \mapsto xy^{-1}z$, that is ternary self-distributive.
Hopf algebras  can be endowed with
 the algebra version of the heap operation. Using this, we construct 
braided Frobenius algebras from a class of certain Hopf algebras  that admit integrals and cointegrals.  For these Hopf algebras we show that the heap operation induces a braiding, by means of a Yang-Baxter operator on the tensor product, which satisfies the required compatibility conditions.
Diagrammatic methods are employed for proving commutativity between the braiding and Frobenius operations.
\end{abstract}

\maketitle

\date{\empty}

\tableofcontents

\section{Introduction}

Frobenius algebras have been studied 
in recent decades 
in relation to 2-dimensional topological quantum field theories (TQFTs)~\cite{Kock},
and to Khovanov homology~\cite{Khov} in knot theory, that is a categorification of the Jones polynomial~\cite{Jones}.
Braid groups have been extensively used in relation to generalizations of the Jones polynomial, and 
braided monoidal categories have been developed 
to further extend knot invariants  to ribbon graphs~\cite{RT}, that consist of disk vertices and ribbon edges. 
Spatial graphs with a move that corresponds to handle slides have been studied for handlebody-links \cite{Ishii}. Corresponding algebraic structures that have multiplication and braiding at the same time, with compatibility conditions, have also been studied~\cite{CIST,Lebed}. 
Compact surfaces with boundary can be represented by ribbon graphs, and their moves~\cite{Matsu} and their invariants~\cite{IMM} have been studied. 
For  algebraic objects having both Frobenius and braiding structures,
Frobenius objects in braided monoidal categories   was proposed  in \cite{Comeau},
and relations to a certain tangle category were discussed.

\begin{figure}[htb]
\begin{center}
\includegraphics[width=2.8in]{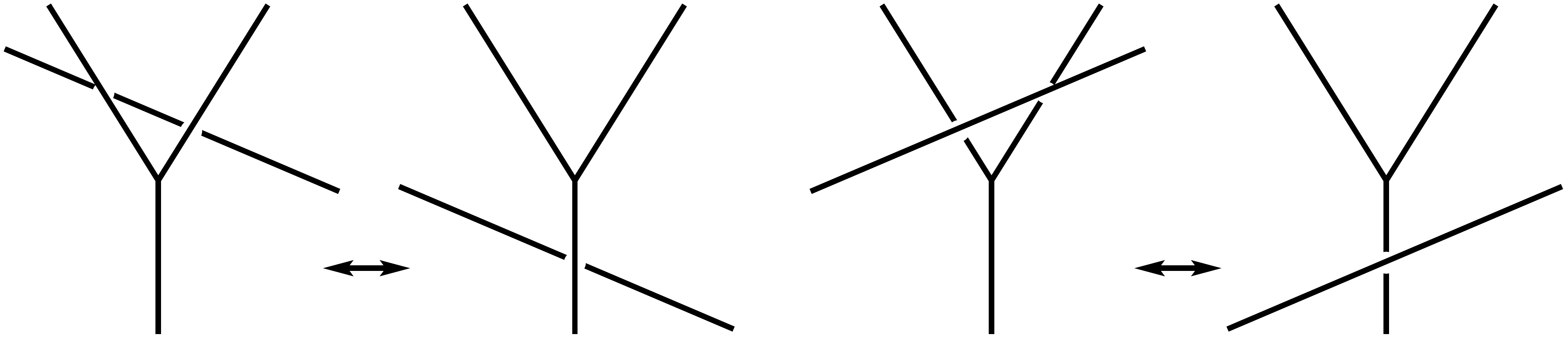}
\end{center}
\caption{Axioms of a braided Frobenius algebra}
\label{BF}
\end{figure}

Motivated from these developments, in this paper, we present  a construction of 
braided Frobenius algebras from certain Hopf algebras.
A  {\it   braided   Frobenius algebra}  
is a Frobenius object in the braided strict monoidal category of  
modules over 
unital rings
 (Definition~\ref{def:braidFrob}).
Specifically, a braided Frobenius algebra is a Frobenius algebra $X=(V, \mu, \eta, \Delta, \epsilon)$  (multiplication, unit, comltiplication, counit)  over a 
unital ring
 ${\mathbb k}$, which commute with the braiding, as explicitly formulated below. 
This commutation is represented by diagrams depicted in Figure~\ref{BF}, where the multiplication and braiding are represented by trivalent vertices and crossings, respectively, and these are part of moves for spatial graph diagrams. 

\begin{figure}[htb]
\begin{center}
\includegraphics[width=3in]{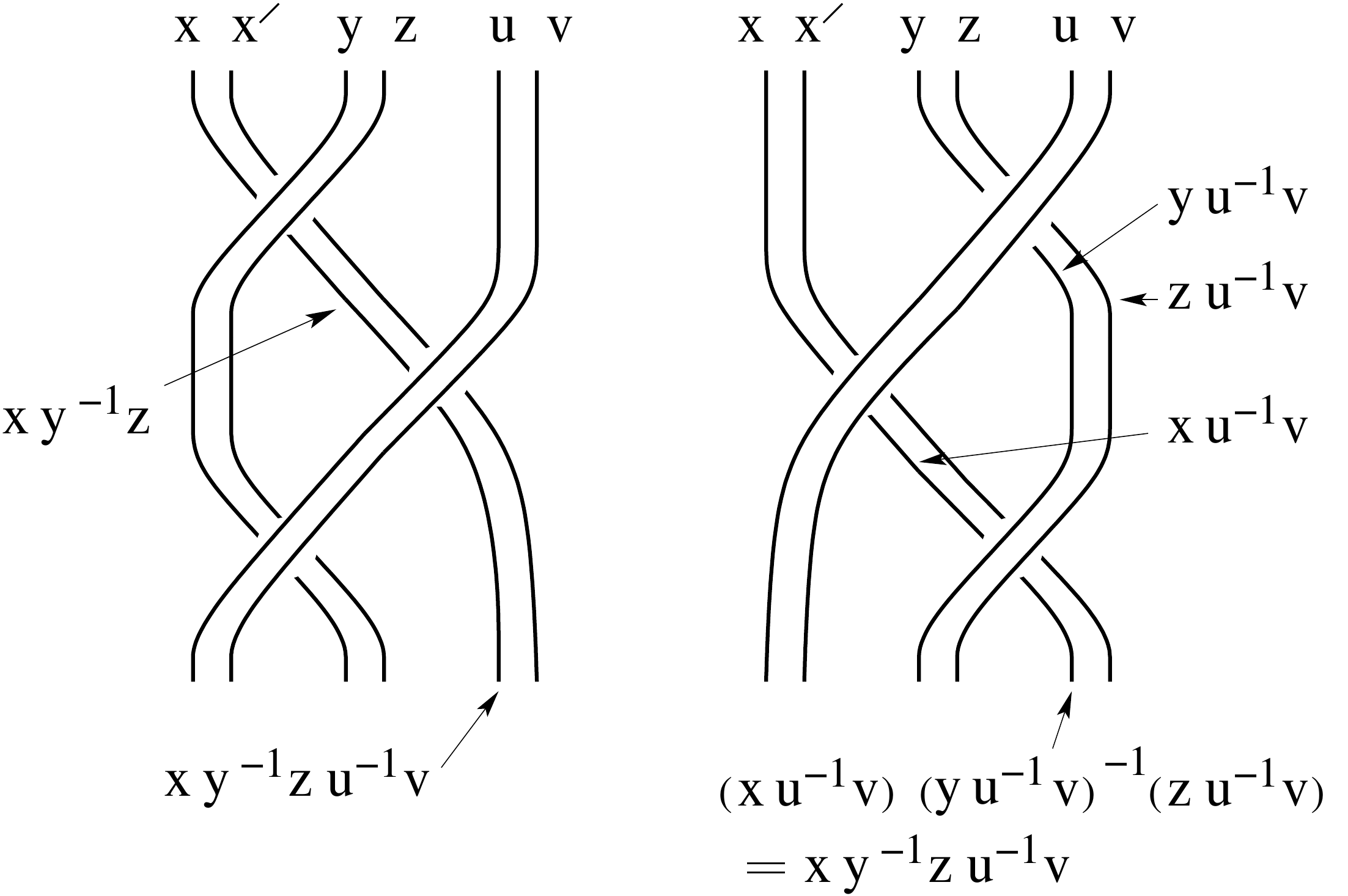}
\end{center}
\caption{Heap operation and braid relation}
\label{heaptypeIII}
\end{figure}

The idea of the construction is based on {\it heaps}. 
A heap is an abstraction of a group endowed with the ternary operation $a\times b\times c \mapsto T(a,b,c)= ab^{-1}c$. 
It is computed that this operation on a group satisfies the {\it ternary self-distributive law} (TSD)
$ T ( ( x, y,z ) ,  u,v)= T ( T(x,u,v), T(y,u,v) T(z,u,v ) )$ for all $x,y,z,u,v$.
Binary self-distributive operations have been studied in relation to the Yang-Baxter operators 
through tensor categories (e.g., \cite{CCES}).
 In \cite{ESZ} a diagrammatic interpretation of TSD was given in terms of framed links,
providing set-theoretic Yang-Baxter operators. 
The assignment of heap elements on arcs and the heap operations to crossings are
depicted in Figure~\ref{heaptypeIII}, together with the TSD property corresponding to
a braid relation (the type III Reidemeister move in knot theory). 
In \cite{ESZheap},  the constructions of TSD operations
from heaps were generalized to monoidal categories. Those in the category of finite dimensional Hopf algebras over a field are called {\it quantum heaps}. 
We use quantum heaps $X$ to construct 
a Frobenius algebra structure 
on $V=X \otimes X$ 
that commute 
with 
braiding induced from the TSD operations.
A key method of proofs is extensive  use of diagrams. 

The paper is organized as follows. In Section~\ref{sec:Pre} we review basic definitions and facts regarding heap structures, Hopf algebras, Frobenius algebras and Yang-Baxter operators. In Section~\ref{sec:TSD} we deal with ternary self-distributive (TSD) structures in coalgebras, 
and  construct a Yang-Baxter operator associated to a TSD structure arising from quantum heaps in Hopf algebras. 
In Section~\ref{sec:Braid} (co)pairings are constructed that commute with braidings.
These (co)pairing are used for (co)units for Frobenius structures.
In Section~\ref{sec:BraidFrob} we introduce the notion of braided Frobenius algebra and show that there is a class of these structures arising from quantum heaps where a Frobenius algebra is defined via Hopf algebra (co)integrals. Section~\ref{sec:Twist} discusses relations to compact surfeces with boundary embedded in 3-space, and 
issues of twists in braided Frobenius algebras.

\section{Preliminary}\label{sec:Pre}

In this section we review materials used in this paper. 

\subsection{Heaps}

We recall the definition and basic properties of heaps.
Given a set $X$ with a ternary operation $[ - ]$, the set of equalities 
$$ 
 [  [ x_1, x_2, x_3 ], x_4, x_5  ]=  [  x_1,[  x_4, x_3, x_2 ]  ,  x_5 ] = [  x_1,x_2, [  x_3, x_4, x_5 ]  ]   
$$ 
 is called para-associativity.
The 
equations $[x,x,y]=y$ and $[x, y, y ] = x$ are called the degeneracy conditions.
A {\it heap} is a non-empty set with a ternary operation satisfying 
		the para-associativity  
		and the degeneracy conditions~\cite{ESZheap}.
A typical example of a heap is a group $G$ where the ternary operation is given by $[x,y,z]=xy^{-1}z$,
which we call a {\it group heap}. 

Let $X$ be a set   with a ternary operation  $(x,y,z)\mapsto  T(x,y,z)$.
The condition 
$ T ( ( x, y,z ) ,  u,v)= T ( T(x,u,v), T(y,u,v) T(z,u,v ) )$ for all $x,y,z,u,v \in X$, is called {\it  ternary 
	 self-distributivity}, TSD for short. 
It is known and easily checked that the heap operation $(x,y,z)\mapsto  [x,y,z]=T(x,y,z)$ is ternary self-distributive. We focus on the TSD property of heaps.

\subsection{Hopf algebras}

A {\it Hopf algebra} $(X, \mu,  \eta,  \Delta, \epsilon, S)$ (a 
module over a unital ring 
$\mathbb k$,
multiplication, unit, comultiplication, counit, antipode,  respectively), is
defined as follows. 
First, a bialgebra 
$X$  
has a multiplication $\mu: X\otimes X\longrightarrow X$ with unit $\eta$ and a comultiplication $\Delta: X\longrightarrow X\otimes X$ with counit $\epsilon$ such that the compatibility condition $\Delta \circ\mu = (\mu\otimes \mu)\tau\circ (\Delta\otimes \Delta)$ holds. Then a Hopf algebra is a bialgebra endowed with a map $S: X\longrightarrow X$, called {\it antipode}, satisfying the equations $\mu \circ (\mathbb 1\otimes S)\circ \Delta = \eta \circ \epsilon = \mu\circ (S\otimes \mathbb 1)\circ\Delta$, called the {\it antipode condition}.

The diagrammatic representation of the algebraic operations appearing in a Hopf algebra is given in Figure~\ref{hopfop}. 
Diagrams are read from top to bottom. For example, the top two arcs 
of the trivalent vertex for $\mu$ (the leftmost diagram) represent $X \otimes X$, the vertex represents $\mu$, and the bottom arc represents $X$. 
In Figure~\ref{hopfaxiom} some of the defining axioms of a Hopf algebra are translated into diagrammatic equalities. Specifically, 
diagrams 
represent
(A) associativity of $\mu$, (B) unit condition, (C), compatibility between $\mu$ and $\Delta$, (D) the antipode condition. The coassociativity and  counit conditions are represented by diagrams that are vertical mirrors of (A) and (B), respectively. 

\begin{figure}[htb]
\begin{center}
\includegraphics[width=2.2in]{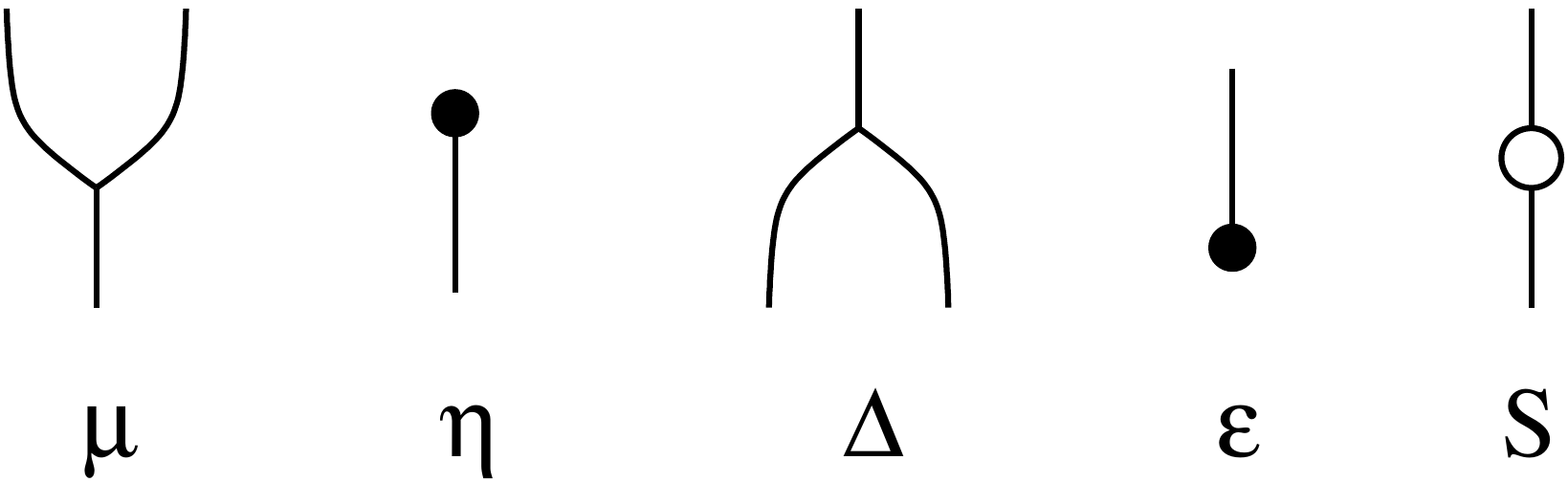}
\end{center}
\caption{Operations of Hopf algebras}
\label{hopfop}
\end{figure}

\begin{figure}[htb]
\begin{center}
\includegraphics[width=4.6in]{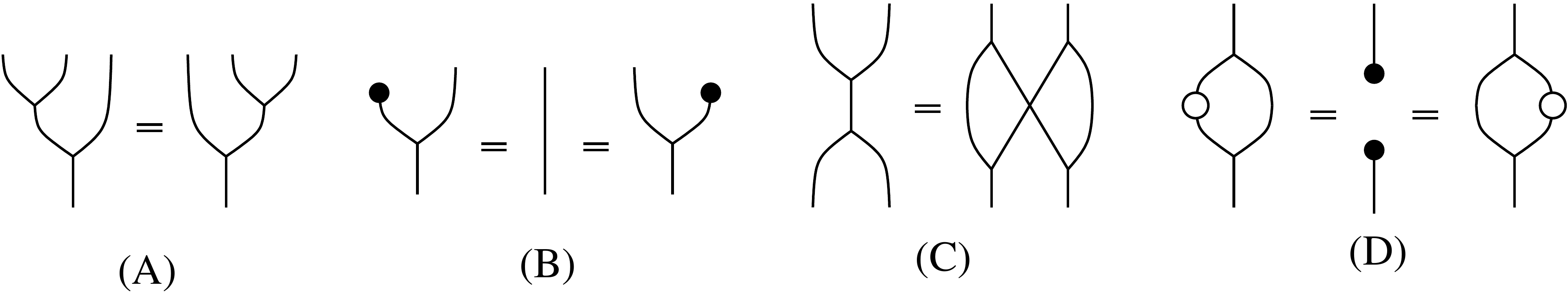}
\end{center}
\caption{Axioms of Hopf algebras}
\label{hopfaxiom}
\end{figure}

Any Hopf algebra satisfies the equality $S \mu  \tau = \mu (S \otimes S)$, where $\tau$ denotes the transposition $\tau(x \otimes y) = y\otimes x$ for simple tensors. 
This equality is depicted in Figure~\ref{mutwist}.
A Hopf algebra is called {\it involutory} if $S^2={\mathbb 1}$, the identity. 
It is known, \cite{Kas} Theorem~III.3.4, that if a Hopf algebra is commutative or cocommutative it follows that it is also involutory. In what follows, we will not mention that our Hopf algebras are involutory when they are (co)commutative, and freely apply the fact that $S^2={\mathbb 1}$ without further mention. 

For the comultiplication, we use Sweedler's notation $\Delta(x)=x^{(1)}\otimes x^{(2)}$ supressing the summation. Further, we use
$( \Delta \otimes {\mathbb 1} ) \Delta (x) = ( x^{(11)} \otimes x^{(12)} ) \otimes x^{(2)}$ and
$( {\mathbb 1}  \otimes \Delta ) \Delta (x) =  x^{(1)} \otimes ( x^{(21)} \otimes x^{(22)} ) $,
both of which are also written as 
$ x^{(1)} \otimes x^{(2)}  \otimes x^{(3)}$ from the coassociativity. 

\begin{figure}[htb]
\begin{center}
\includegraphics[width=1in]{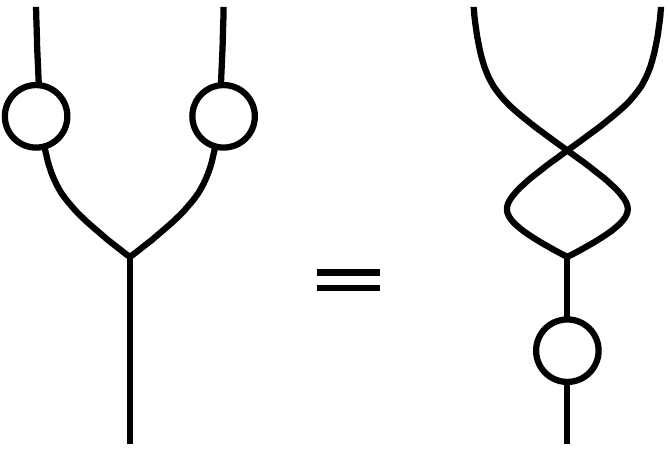}
\end{center}
\caption{Twisting $\mu$ with antipodes}
\label{mutwist}
\end{figure}

                                                                                                                                                                                                                                                                                                                                                                                                                                                                                                                                A  left integral of $X$ is an element $\lambda\in X$ such that $x\lambda = \epsilon (x) \lambda$ for all $x\in X$. A right integral, a (two-sided) integral, cointegrals are defined similarly.      
Diagrams for integral conditions are depicted in Figure~\ref{integ}. 
The diagram (A) represents an integral, (B) represents the defining equation of a left integral, 
and similar for cointegrals in (C) and (D). 
The existence of integrals is a fundamental tool to endow a Hopf algebra with a Frobenius structure (defined below). It is known that the set of integrals of a free finite dimensional 
Hopf algebra over a PID 
admits a one dimensional space of integrals, 
  see \cite{LS}. More generally, a finitely generated projective Hopf algebra over a ring admits a left integral space of rank one \cite{Par}. Observe that when a Hopf algebra is (co)commutative, it follows that a left (co)integral is also a right (co)integral.

\begin{figure}[htb]
\begin{center}
\includegraphics[width=3in]{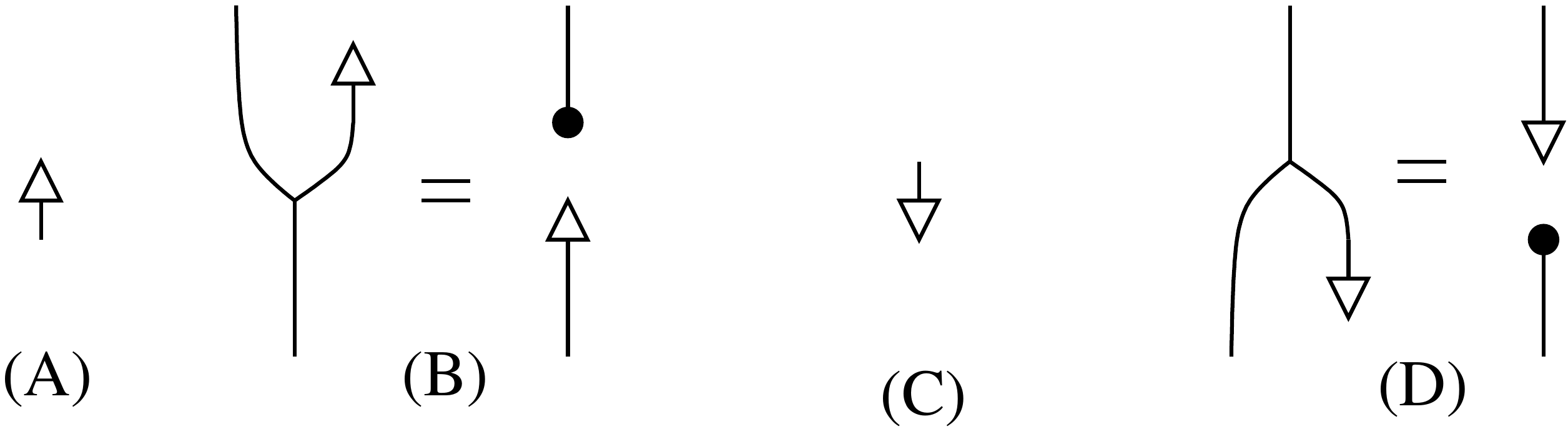}
\end{center}
\caption{Left (co)integral of Hopf algebras}
\label{integ}
\end{figure}

\subsection{Frobenius algebras}

We use the following definition: A {\it Frobenius algebra} $(V, \mu, \eta, \Delta, \epsilon)$ is an associative and coassociative coalgebra over a
unital 
 ring 
${\mathbb k}$ with multiplication $\mu$ and comultiplication $\Delta$, respectively, with unit 
$\eta: {\mathbb k} \rightarrow V$ and counit $\epsilon: V \rightarrow {\mathbb k}$
with the same conditions as Hopf algebras, such that $\mu$ and $\Delta$ satisfy the Frobenius compatibility condition:
$\ (\mu \otimes {\mathbb 1} ) (  {\mathbb 1}  \otimes \Delta) =\Delta \mu 
= ( {\mathbb 1} \otimes \mu) ( \Delta \otimes  {\mathbb 1} )$. 
Thid condition is depicted in Figure~\ref{frobcompati}.

\begin{figure}[htb]
\begin{center}
\includegraphics[width=1.5in]{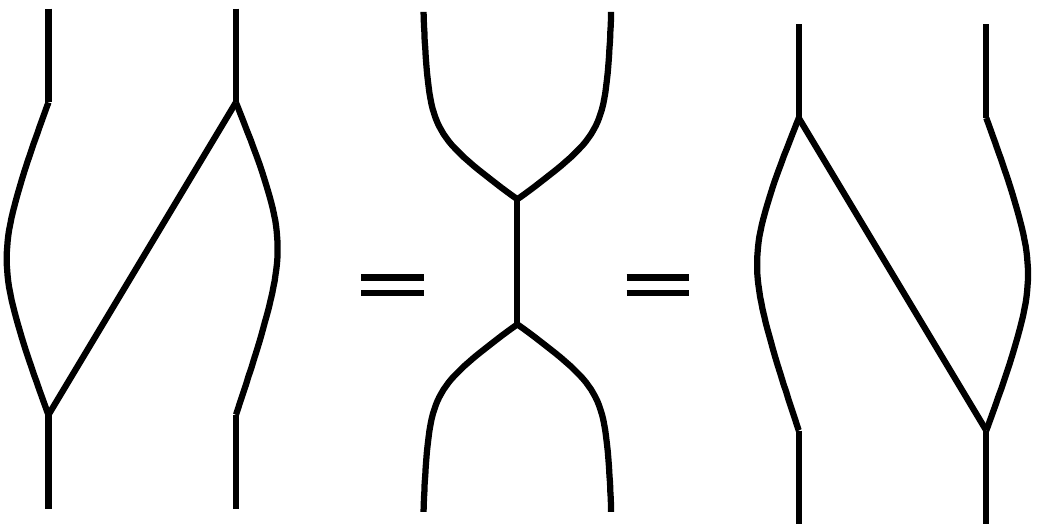}
\end{center}
\caption{ Frobenius  compatibility condition}
\label{frobcompati}
\end{figure}

\subsection{The Yang-Baxter operator}

Let $X$ be a 
module over a ring 
and let $R: X\otimes X\longrightarrow X\otimes X$ be an operator (i.e. a linear map). The Yang-Baxter equation, YBE for short, for $R$ is the functional equation
$$
(R\otimes \mathbb 1)\circ (\mathbb 1\otimes R) \circ (R\otimes \mathbb 1) = (\mathbb 1\otimes R) \circ (R\otimes \mathbb 1) \circ (\mathbb 1\otimes R)
$$
where LHS and RHS are both endomorpshism of $X\otimes X\otimes X$. The YBE is well known to be
represented by 
the type III Reidemeister move in knot theory, and has been widely studied in low-dimensional topology because it produces invariants of knots. If the operator $R$ satisfies the YBE, then it is said to be a {\it pre Yang-Baxter operator}. If, in addition, $R$ is invertible then we say that $R$ is a {\it Yang-Baxter operator}, YB operator for short. 

\section{Ternary self-distributive operations in coalgebras and braidings}\label{sec:TSD}

In this section we provide a method of producing braidings from ternary self-distributive (TSD) operations.

\begin{definition}\cite{ESZ} \label{def:TSD}
{\rm 
A morphism $T: V^{\otimes 3} \rightarrow V$ for a coalgebra $V$ over a unital ring ${\mathbb k}$ 
is called {\it ternary self-distributive} (TSD for short) if 
it satisfies, when expressed in simple tensors, 
\begin{eqnarray*}
\lefteqn{
T(T(x \otimes y \otimes z ) \otimes u \otimes v ) } \\
& = & T(T ( x \otimes u^{(11)}  \otimes v^{(11)} ) \otimes
T ( x \otimes u^{(12)}  \otimes v^{(12)} ) \otimes
T ( x \otimes u^{(2)}  \otimes v^{(2)} ) ) .
\end{eqnarray*}
}
\end{definition}

\begin{lemma}\cite{ESZ} \label{lem:T} 
Let $(X, \mu, \Delta, \iota, \epsilon, S) $ be an involutory Hopf algebra.
Let $T(x \otimes y \otimes z) = x S(y) z = \mu ( \mu (x \otimes S(y) ) \otimes z) $ 
 expressed in simple tensors, 
where the concatenation denotes the 
multiplication. 
Then $T$ is TSD. 
\end{lemma}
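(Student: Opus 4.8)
The plan is to verify the defining identity of Definition~\ref{def:TSD} directly in Sweedler notation, using only the antipode conditions, the anti-homomorphism law of Figure~\ref{mutwist}, and the involutory hypothesis $S^2 = \mathbb 1$. First I would expand the left-hand side. Since $T(x \otimes y \otimes z) = x S(y) z$, a second application of $T$ gives
\[
T(T(x \otimes y \otimes z) \otimes u \otimes v) = \big(x S(y) z\big)\, S(u)\, v = x\, S(y)\, z\, S(u)\, v,
\]
and this is the expression I aim to recover from the right-hand side.

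For the right-hand side I would distribute $u$ and $v$ through their iterated comultiplications, writing $u^{(1)} \otimes u^{(2)} \otimes u^{(3)}$ and $v^{(1)} \otimes v^{(2)} \otimes v^{(3)}$. Abbreviating the three inner terms by $A = x S(u^{(1)}) v^{(1)}$, $B = y S(u^{(2)}) v^{(2)}$, and $C = z S(u^{(3)}) v^{(3)}$, the outer $T$ becomes $T(A \otimes B \otimes C) = A\, S(B)\, C$. The essential input is the anti-homomorphism property $S \mu \tau = \mu(S \otimes S)$, which applied to the three-fold product $B$ (together with $S^2 = \mathbb 1$) yields $S(B) = S(v^{(2)})\, u^{(2)}\, S(y)$. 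Substituting produces
\[
\mathrm{RHS} = x\, S(u^{(1)})\, v^{(1)}\, S(v^{(2)})\, u^{(2)}\, S(y)\, z\, S(u^{(3)})\, v^{(3)}.
\]

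The final step is to collapse the two interior clusters using the antipode conditions. By coassociativity the antipode condition $\mu(\mathbb 1 \otimes S) \Delta = \eta \epsilon$ together with the counit axiom gives $v^{(1)} S(v^{(2)}) \otimes v^{(3)} = 1 \otimes v$; this turns the factor $v^{(1)} S(v^{(2)})$ into the unit and frees $v^{(3)}$ to reappear as $v$ in the last position. Once that cluster is a scalar, the factors $S(u^{(1)})$ and $u^{(2)}$ become adjacent, and the companion condition $\mu(S \otimes \mathbb 1) \Delta = \eta \epsilon$ gives $S(u^{(1)}) u^{(2)} \otimes u^{(3)} = 1 \otimes u$, so that the right-hand side reduces to $x\, S(y)\, z\, S(u)\, v$, matching the left-hand side. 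I expect the only genuine hazard to be the bookkeeping of Sweedler indices across the three tensor slots---specifically, checking that after the anti-homomorphism is applied to $S(B)$ the pairs $v^{(1)} S(v^{(2)})$ and $S(u^{(1)}) u^{(2)}$ are positioned so that the antipode--counit cancellations apply in sequence, with the surviving legs $u^{(3)}, v^{(3)}$ playing the role of bystanders. Because every manipulation is an equality of morphisms, the same computation transcribes verbatim into a diagrammatic cancellation of antipode and (co)unit vertices, in the style used elsewhere in the paper.
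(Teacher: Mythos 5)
Your computation is correct and complete: the expansion of $S(B)$ via the anti-homomorphism law together with $S^2=\mathbb 1$, followed by the two antipode--counit cancellations $v^{(1)}S(v^{(2)})\otimes v^{(3)}=1\otimes v$ and $S(u^{(1)})u^{(2)}\otimes u^{(3)}=1\otimes u$ (which apply directly because the Sweedler legs already occur in their natural left-to-right order, so no cocommutativity is needed), reduces the right-hand side to $xS(y)zS(u)v$ as required. The paper itself defers this proof to the cited reference and records only the diagrammatic version (Figure~\ref{qheap}); your argument is precisely the algebraic transcription of that same cancellation of antipode and (co)unit vertices, so it is essentially the intended proof.
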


This construction is represented by the diagrams in Figure~\ref{qheap}.

\begin{figure}[htb]
\begin{center}
\includegraphics[width=1.5in]{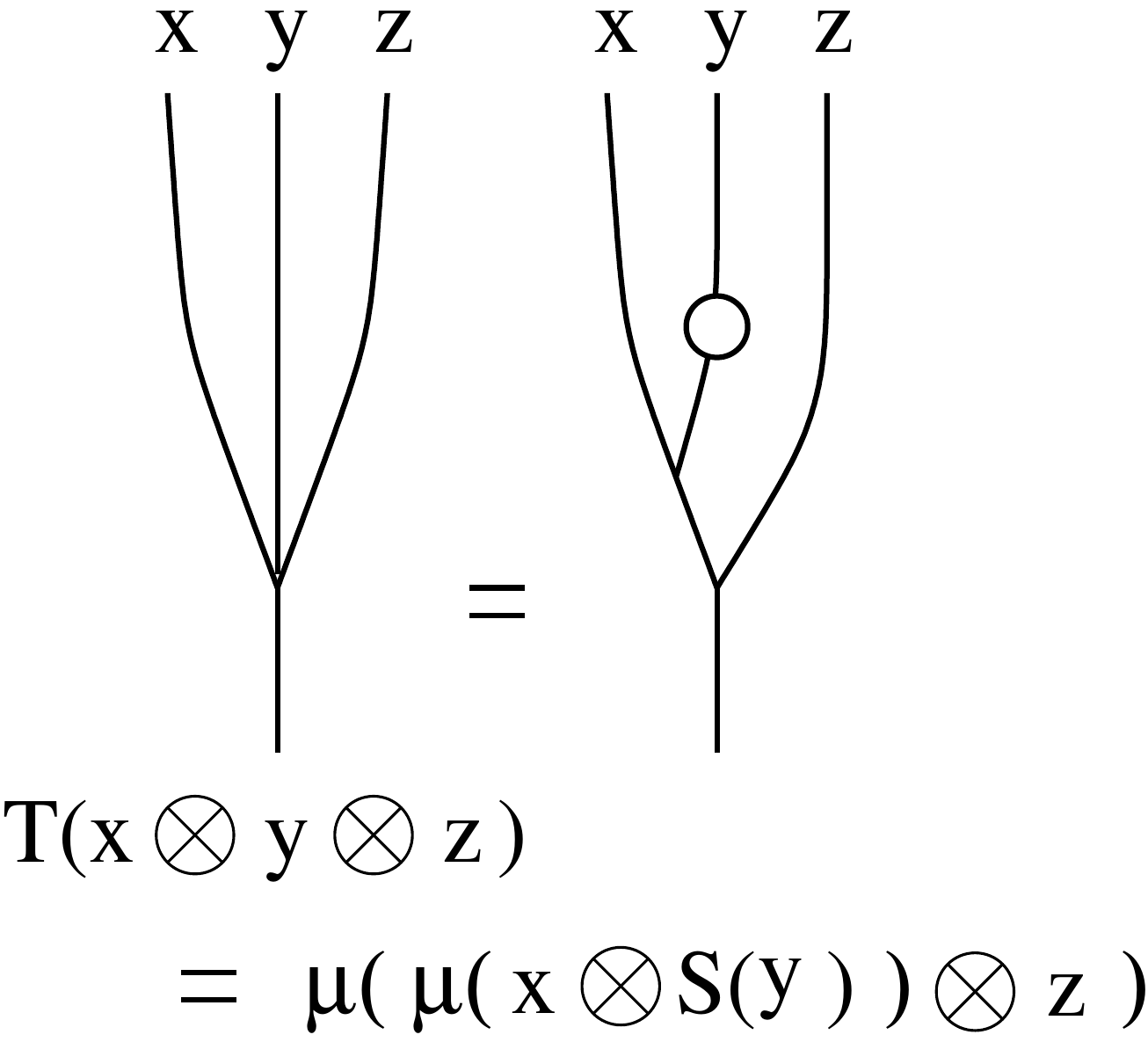}
\end{center}
\caption{Quantum heap operation as a TSD}
\label{qheap}
\end{figure}

\begin{definition}
{\rm
A TSD morphism 
   $T: V^{\otimes 3} \rightarrow V$ for a module $V$ over a unital ring ${\mathbb k}$ 
is called {\it invertible } if it satisfies 
$$
T(T(x\otimes y^{(2)}\otimes z^{(2)})\otimes z^{(1)}\otimes y^{(1)}) = \epsilon(y)\epsilon(z) \cdot x,
$$
for all $x, y, z \in V$. 
}
\end{definition}


\begin{lemma}\label{lem:qheapinv}
Let $(X, \mu, \Delta, \iota, \epsilon, S) $ be an involutory
 Hopf algebra,
and let $T(x \otimes y \otimes z) = x S(y) z $ be as defined in Lemma~\ref{lem:T}. 
If $(X, \Delta)$ is cocommutative, then $T$ is invertible.
\end{lemma}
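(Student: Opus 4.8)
The plan is to verify the invertibility identity by a direct Sweedler-notation computation: expand the definition $T(x \otimes y \otimes z) = x S(y) z$ twice and then collapse the resulting word using the antipode condition, with cocommutativity inserted at the decisive moment. First I would substitute into the inner $T$ to get $T(x \otimes y^{(2)} \otimes z^{(2)}) = x\, S(y^{(2)})\, z^{(2)}$, and then feed this into the outer $T$ with second and third arguments $z^{(1)}$ and $y^{(1)}$, obtaining
\[
T\big(T(x \otimes y^{(2)} \otimes z^{(2)}) \otimes z^{(1)} \otimes y^{(1)}\big) = x\, S(y^{(2)})\, z^{(2)}\, S(z^{(1)})\, y^{(1)},
\]
where all juxtaposition denotes the (associative) multiplication $\mu$.

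The key observation is that the two adjacent subwords $z^{(2)} S(z^{(1)})$ and $S(y^{(2)}) y^{(1)}$ are each a \emph{reversed} instance of the antipode condition. By associativity I would group the expression as $x \cdot S(y^{(2)}) \cdot \big(z^{(2)} S(z^{(1)})\big) \cdot y^{(1)}$ and simplify the middle factor first. Here cocommutativity of $\Delta$, i.e.\ $z^{(1)} \otimes z^{(2)} = z^{(2)} \otimes z^{(1)}$ in Sweedler notation, lets me swap the two tensor components so that $z^{(2)} S(z^{(1)}) = z^{(1)} S(z^{(2)})$, which by the antipode condition $\mu(\mathbb{1} \otimes S)\Delta = \eta\epsilon$ equals $\epsilon(z)\cdot 1$. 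Pulling out the scalar leaves $\epsilon(z)\, x\, S(y^{(2)})\, y^{(1)}$, and an identical argument applies to the remaining factor: cocommutativity gives $S(y^{(2)}) y^{(1)} = S(y^{(1)}) y^{(2)}$, and the condition $\mu(S \otimes \mathbb{1})\Delta = \eta\epsilon$ turns this into $\epsilon(y)\cdot 1$. Combining, the left-hand side collapses to $\epsilon(y)\epsilon(z)\cdot x$, as required.

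The only real subtlety is the placement of the cocommutativity step, and this is precisely where the hypothesis is consumed: the antipode axioms as stated apply to $z^{(1)} S(z^{(2)})$ and $S(z^{(1)}) z^{(2)}$, so without the symmetry $z^{(1)} \otimes z^{(2)} = z^{(2)} \otimes z^{(1)}$ the reversed products appearing after expanding $T$ need not collapse at all. It is worth noting that the computation uses only cocommutativity together with associativity and the antipode axioms; the involutory hypothesis $S^2 = \mathbb{1}$ plays no independent role here, consistent with the fact recalled earlier that a cocommutative Hopf algebra is automatically involutory. The same argument can be rendered diagrammatically, sliding the $z^{(1)}$ and $y^{(1)}$ strands past the crossing via the cocommutativity symmetry before applying the antipode relation to cap off the two loops.
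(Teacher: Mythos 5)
Your proof is correct and follows essentially the same route as the paper's: expand $T$ twice to obtain $x\,S(y^{(2)})\,z^{(2)}\,S(z^{(1)})\,y^{(1)}$, then use cocommutativity to convert the two reversed subwords into instances of the antipode condition, yielding $\epsilon(z)$ and $\epsilon(y)$. Your added observation that $S^2=\mathbb{1}$ is never actually invoked is accurate and consistent with the paper's remark that cocommutativity already forces involutivity.
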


\begin{proof}
One computes 
\begin{eqnarray*}
\lefteqn{ T(T(x\otimes y^{(2)}\otimes z^{(2)})\otimes z^{(1)}\otimes y^{(1)}) }\\
& & 
=  x S(  y^{(2)})  z^{(2)} S( z^{(1)} ) y^{(1)}
= x S(  y^{(2)})  S (  z^{(1)}  ) z^{(2)} y^{(1)}
= \epsilon(z) \cdot x S(  y^{(2)})  y^{(1)}
 = \epsilon(y)\epsilon(z) \cdot x
\end{eqnarray*}
as desired. 
\end{proof}

\begin{figure}[htb]
\begin{center}
\includegraphics[width=6in]{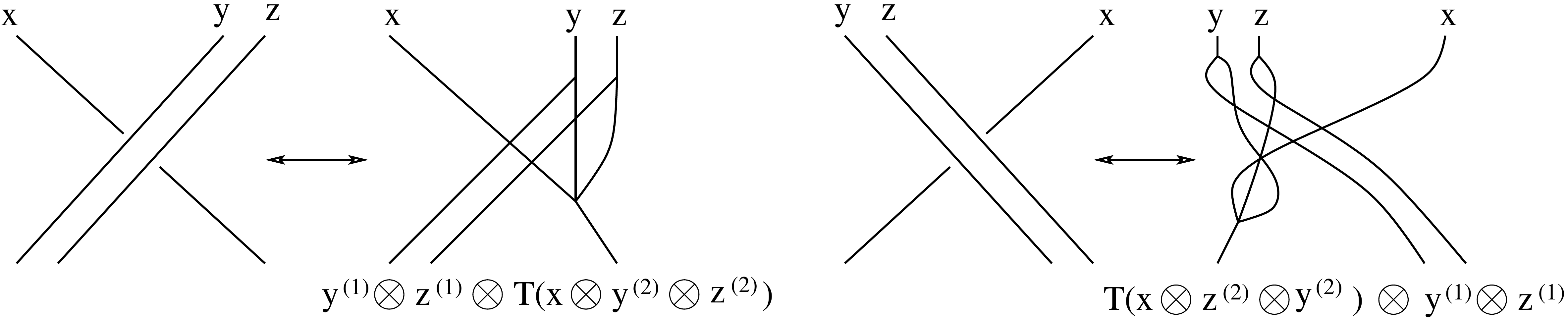}
\end{center}
\caption{Hopf algebra maps corresponding to crossings}
\label{crossingdetail}
\end{figure}

\begin{lemma}\label{lem:singletypeII}
	Let $(X, \Delta) $ be a cocommutative coalgebra  over a unital ring ${\mathbb k}$.
	Let $T: X^{\otimes 3} \rightarrow X$ be an invertible  TSD coalgebra morphism.
	Then the map $\beta_1: X^{\otimes 3} \rightarrow  X^{\otimes 3}$ defined for 
	simple tensors
	by 
	$\beta_1 (x \otimes y \otimes z) = y^{(1)} \otimes z^{(1)} \otimes T(x \otimes y^{(2)} \otimes z^{(2)}) $
	is invertible with the inverse 
	$\beta_1^{-1} ( y \otimes z \otimes x) = T(x \otimes z^{(2)} \otimes y^{(2)} ) \otimes y^{(1)} \otimes z^{(1)}$, so that $\beta_1 \beta_1^{-1}={\mathbb 1}$ and $ \beta_1^{-1} \beta_1 ={\mathbb 1}$.
	
\end{lemma}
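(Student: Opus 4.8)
The plan is to check the two relations $\beta_1^{-1}\beta_1={\mathbb 1}$ and $\beta_1\beta_1^{-1}={\mathbb 1}$ by direct computation in Sweedler notation, the only nontrivial input being the invertibility condition for $T$ together with coassociativity and the counit axiom.

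First I would compute $\beta_1^{-1}\beta_1(x\otimes y\otimes z)$. Applying $\beta_1$ yields $y^{(1)}\otimes z^{(1)}\otimes T(x\otimes y^{(2)}\otimes z^{(2)})$. Applying $\beta_1^{-1}$ then comultiplies the first two tensor slots and leaves the $T$-value in the last slot undivided, producing iterated comultiplications $(y^{(1)})^{(1)}\otimes(y^{(1)})^{(2)}$ (and likewise for $z$). By coassociativity I rewrite these in the three-fold notation $y^{(1)}\otimes y^{(2)}\otimes y^{(3)}$, after which the expression becomes
\[ T\big(T(x\otimes y^{(3)}\otimes z^{(3)})\otimes z^{(2)}\otimes y^{(2)}\big)\otimes y^{(1)}\otimes z^{(1)} . \]
The crux is to recognize the nested double-$T$ as a verbatim instance of the invertibility condition: taking the elements whose comultiplications are $y^{(2)}\otimes y^{(3)}$ and $z^{(2)}\otimes z^{(3)}$ to play the roles of $y$ and $z$ in that identity, the inner and outer arguments line up exactly slot for slot, so the double-$T$ collapses to $x$ up to counit factors on the remaining pieces of $y$ and $z$. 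The counit axiom then absorbs those factors into $y^{(1)}$ and $z^{(1)}$, leaving $x\otimes y\otimes z$, as required.

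The computation of $\beta_1\beta_1^{-1}(y\otimes z\otimes x)$ is entirely symmetric: $\beta_1^{-1}$ gives $T(x\otimes z^{(2)}\otimes y^{(2)})\otimes y^{(1)}\otimes z^{(1)}$, and $\beta_1$ comultiplies the last two slots while leaving the $T$-value in the first slot undivided; after the same coassociative relabelling one again meets a double-$T$ of invertibility type, which collapses via the counit to $y\otimes z\otimes x$. The main obstacle throughout is purely the bookkeeping of the nested Sweedler indices --- ensuring that after reduction to three-fold comultiplication the arguments of the double-$T$ match the invertibility pattern exactly, with the correct $y$--$z$ pairing in the inner and outer slots. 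It is worth noting that the $T$-values are never themselves comultiplied in either composite, so neither the coalgebra-morphism property of $T$ nor the cocommutativity hypothesis is needed for this particular verification; the work is done entirely by coassociativity, the counit law, and the invertibility of $T$.
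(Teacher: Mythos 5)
Your computation matches the paper's proof essentially verbatim: apply the two maps in succession, relabel the nested Sweedler indices via coassociativity (the paper writes $y^{(1)}\otimes y^{(21)}\otimes y^{(22)}$ where you write $y^{(1)}\otimes y^{(2)}\otimes y^{(3)}$), recognize the resulting double-$T$ as an instance of the invertibility condition, and absorb the counit factors; the paper likewise treats $\beta_1\beta_1^{-1}={\mathbb 1}$ as ``similar considerations.'' Your closing observation that neither cocommutativity nor the coalgebra-morphism property of $T$ is actually invoked in this verification is also accurate --- the paper's computation uses only coassociativity, the counit law, and invertibility --- so the proposal is correct and takes the same route.
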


\begin{proof}
	The proof is an application of the invertibility condition of $T$. On simple tensors we have
	\begin{eqnarray*}
		\beta_1^{-1}\beta_1(x\otimes y\otimes z) &=&\beta_1^{-1}( y^{(1)}\otimes z^{(1)}\otimes T(x\otimes y^{(2)}\otimes z^{(2)}))\\
		&=& T(T(x\otimes y^{(2)}\otimes z^{(2)})\otimes z^{(12)}\otimes y^{(12)})\otimes y^{(11)}\otimes z^{(11)}\\
			&=& T(T(x\otimes y^{(22)}\otimes z^{(22)})\otimes z^{(21)}\otimes y^{(21)})\otimes y^{(1)}\otimes z^{(1)}\\
		&=& \epsilon(y^{(2)})\epsilon (z^{(2)}) x\otimes y^{(1)}\otimes z^{(1)}\\
		&=& x\otimes y\otimes z,
	\end{eqnarray*}
	which shows that $\beta_1^{-1} \beta_1 = \mathbb 1$. Similar considerations imply that $\beta_1\beta_1^{-1} = \mathbb 1$ as well. 
\end{proof}

Diagrammatic representations of  morphisms $\beta_1$ and $\beta_1^{-1}$ in Lemma~\ref{lem:singletypeII} are depicted in the left and right of Figure~\ref{crossingdetail}, 
respectively. The first equality $\beta_1^{-1} \beta_1=  \mathbb 1$ in the lemma is represented by Figure~\ref{typeIIhalf}. 

\begin{figure}[htb]
\begin{center}
\includegraphics[width=1in]{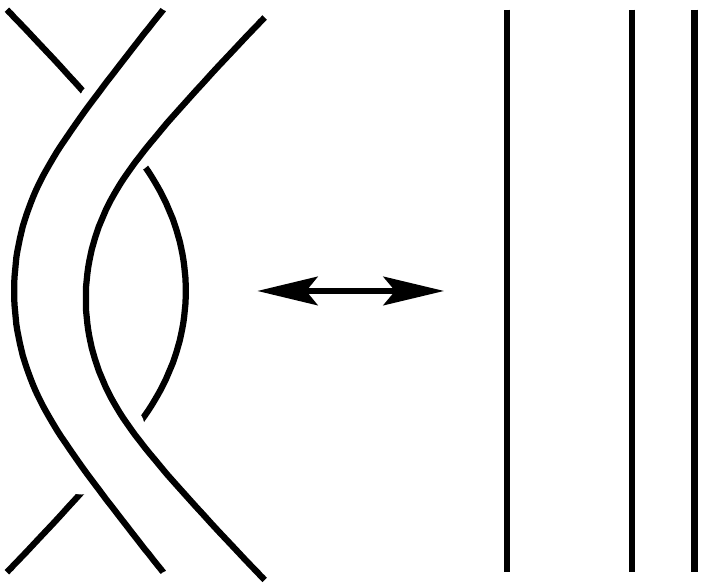}
\end{center}
\caption{The type II Reidemeister move with a single under-arc and double over-arcs.}
\label{typeIIhalf}
\end{figure} 
 
\begin{lemma}
 \label{lem:B} 
Let $(X, \Delta) $ be a cocommutative coalgebra  over a unital ring ${\mathbb k}$.
Let $T: X^{\otimes 3} \rightarrow X$ be an invertible  TSD coalgebra morphism.
Let $V=X \otimes X$ be endowed with the tensor coalgebra structure induced by $(X,\Delta)$.  
Then the map $\beta: V^{\otimes 2} \rightarrow  V^{\otimes 2}$ defined for 
simple tensors
 by 
$$\beta( ( x\otimes x')  \otimes ( y \otimes z)  ) := 
( y^{(1)} \otimes z^{(1)} ) \otimes 
T(x \otimes y^{(21)} \otimes z^{(21)} ) \otimes
T(x'  \otimes y^{(22)} \otimes z^{(22)} ) $$
satisfies the YBE. 
Furthermore, there is an inverse 
$$\beta^{-1} ( ( y \otimes z ) \otimes (x \otimes x') )
= (T ( x \otimes z^{(21)} \otimes y^{(21)} ) \otimes T ( x'  \otimes z^{(22)} \otimes y^{(22)} ) )
\otimes y^{(1)} \otimes z^{(1)} . $$
\end{lemma}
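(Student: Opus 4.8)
The plan is to prove the two assertions of Lemma~\ref{lem:B} separately, reducing both to the already-established single-variable statement in Lemma~\ref{lem:singletypeII}. The map $\beta$ on $V^{\otimes 2}=X^{\otimes 4}$ is essentially two copies of $\beta_1$ running in parallel over the two tensor factors coming from $x$ and $x'$, glued together because they share the over-strands $y$ and $z$. Concretely, after comultiplying $y$ and $z$ appropriately, the first copy of $\beta_1$ acts on $(x,y^{(21)},z^{(21)})$ and the second on $(x',y^{(22)},z^{(22)})$, with the surviving factors $y^{(1)}\otimes z^{(1)}$ passing underneath. So the structural idea is that $\beta$ is a ``doubled'' version of $\beta_1$ built using the cocommutative coalgebra structure of $X$.

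First I would verify invertibility, since that is the cleaner computation and it mirrors the proof of Lemma~\ref{lem:singletypeII} almost verbatim. I would compute $\beta^{-1}\beta(( x\otimes x')\otimes(y\otimes z))$ on simple tensors, expand the comultiplications of $y$ and $z$, apply coassociativity to rename the iterated Sweedler indices, and then invoke the invertibility of $T$ (the identity $T(T(x\otimes y^{(2)}\otimes z^{(2)})\otimes z^{(1)}\otimes y^{(1)})=\epsilon(y)\epsilon(z)\cdot x$) twice, once on the $x$-slot and once on the $x'$-slot. The counits $\epsilon(y^{(2)})$, $\epsilon(z^{(2)})$ produced this way collapse the over-strand factors back to $y\otimes z$, giving the identity; the reverse composition $\beta\beta^{-1}=\mathbb 1$ is symmetric. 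The cocommutativity of $\Delta$ is what lets me freely reorder the paired Sweedler factors so that the correct indices meet inside each $T$.

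For the Yang-Baxter equation I would work on $V^{\otimes 3}=X^{\otimes 6}$ and expand both $(\beta\otimes\mathbb 1)(\mathbb 1\otimes\beta)(\beta\otimes\mathbb 1)$ and $(\mathbb 1\otimes\beta)(\beta\otimes\mathbb 1)(\mathbb 1\otimes\beta)$ on a simple tensor, tracking how the Sweedler subscripts proliferate. The key reduction is that each side is governed entirely by the TSD law for $T$ together with coassociativity and cocommutativity of $X$: the braid relation for the doubled map $\beta$ follows from the braid relation for $\beta_1$ applied independently in the $x$-factor and the $x'$-factor, since the two parallel copies of $\beta_1$ never interact except through the shared comultiplied over-strands. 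I expect the main obstacle to be purely bookkeeping: after three applications of $\beta$ the comultiplication has been iterated enough times that the Sweedler indices become genuinely unwieldy, and the real content is verifying that the TSD identity
\[
T(T(x\otimes y\otimes z)\otimes u\otimes v)=T(T(x\otimes u^{(11)}\otimes v^{(11)})\otimes T(x\otimes u^{(12)}\otimes v^{(12)})\otimes T(x\otimes u^{(2)}\otimes v^{(2)}))
\]
lines up correctly on both factors after the indices are matched using coassociativity. Because the computation is error-prone in symbolic form, I would instead present the argument diagrammatically, representing $\beta$ and $\beta^{-1}$ as in Figure~\ref{crossingdetail} and deducing the YBE from the type III Reidemeister move, using the already-verified half–type-II relation (Figure~\ref{typeIIhalf}) to handle invertibility; this keeps the Sweedler gymnastics implicit and makes the parallel-copies structure visually transparent.
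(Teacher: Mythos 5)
Your proposal is correct and follows essentially the same route as the paper: the YBE is verified by expanding both sides on simple tensors and matching them via the TSD identity applied in each of the two under-strand slots, using cocommutativity and the coalgebra-morphism property of $T$ to reconcile the Sweedler indices, while invertibility is reduced to Lemma~\ref{lem:singletypeII} through the observation that $\beta$ is two parallel copies of $\beta_1$ sharing the comultiplied over-strands (the paper makes this precise as the factorization $\beta=(\beta_1\otimes\mathbb 1)\circ(\mathbb 1\otimes\beta_1)$). The only cosmetic difference is that you verify $\beta^{-1}\beta=\mathbb 1$ by a direct double application of the invertibility of $T$ rather than by iterating the factored form, which amounts to the same computation.
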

\begin{proof}
	We show that the YBE holds on simple tensors. Let $x,y,z,w,u,v\in X$, then the LHS of the YBE is computed as 
	\begin{eqnarray*}
	\lefteqn{(\beta\otimes \mathbb 1)(\mathbb 1\otimes \beta)(\beta\otimes \mathbb 1)(x\otimes y \otimes z\otimes w\otimes u\otimes v)}\\
	&=& u^{(1)}\otimes v^{(1)}\otimes T(z^{(1)}\otimes u^{(2)}\otimes v^{(2)})\otimes T(w^{(1)}\otimes u^{(3)}\otimes v^{(3)})\\
	&&\hspace{0.5cm} \otimes T(T(x\otimes z^{(2)}\otimes w^{(2)})\otimes u^{(4)}\otimes v^{(4)}) \\
	&&\hspace{0.5cm} \otimes T(T(y\otimes z^{(3)}\otimes w^{(3)})\otimes u^{(5)}\otimes v^{(5)}).
	\end{eqnarray*}
    The RHS computed on $x\otimes y\otimes z\otimes w\otimes u\otimes v$ gives 
    \begin{eqnarray*}
    	\lefteqn{(\mathbb 1\otimes \beta)\circ (\beta\otimes \mathbb 1)\circ (\mathbb 1\otimes \beta)(x\otimes y \otimes z\otimes w\otimes u\otimes v)}\\
    	&=& u^{(1)}\otimes v^{(1)}\otimes T(z^{(1)}\otimes u^{(4)}\otimes v^{(4)})\otimes T(w^{(1)}\otimes u^{(7)}\otimes v^{(7)}) \\
    	&& \otimes T(T(x\otimes u^{(2)}\otimes v^{(2)})\otimes T(z^{(2)}\otimes u^{(5)}\otimes v^{(5)})
    \otimes T(w^{(2)}\otimes u^{(8)}\otimes v^{(8)}))\\
    	&&\otimes T(T(y\otimes u^{(3)}\otimes v^{(3)})\otimes T(z^{(3)}\otimes u^{(6)}\otimes v^{(6)})
 \otimes T(w^{(3)}\otimes u^{(9)}\otimes v^{(9)})).
    \end{eqnarray*}
     Rearranging terms by means of the cocommutativity of $\Delta$, using the fact that $T$ is a coalgebra morphism and applying the TSD property of $T$, we see that the two terms coincide, showing that $\beta$ satisfies the YBE.
     
     To show that $\beta$ is invertible observe that, since $\Delta$ is cocommutative, one has $\beta = (\beta_1 \otimes \mathbb 1) \circ (\mathbb 1\otimes \beta_1)$. Similar considerations allow us to write $\beta^{-1}$ as composition of terms where $\beta_1^{-1}$ appears. An iteration of Lemma~\ref{lem:singletypeII} then shows that $\beta^{-1}$ is the inverse of $\beta$. 
\end{proof}

		Figure~\ref{crossingdetail} shows the diagrammatic interpretation of the braiding and its inverse in Lemma~\ref{lem:B} on a single edge of a ribbon. The full braiding, as well as its inverse, is obtained by repeating the procedure on both edges that delimit a ribbon.  

\begin{lemma}\label{lem:quantumbraid}
	Let $(X,\mu, \eta, \Delta, \epsilon, S)$ be an involutory Hopf algebra. Then the map $\beta : X^{\otimes 2}\rightarrow X^{\otimes 2}$ defined on simple tensors as 
	$$
	x\otimes y \otimes z\otimes w \mapsto z^{(1)}\otimes w^{(1)} \otimes xS(z^{(2)})w^{(2)} \otimes yS(z^{(3)})w^{(3)}
	$$
	is a Yang-Baxter operator. 
\end{lemma}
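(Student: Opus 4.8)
The plan is to read off the statement from the structural results already in place, by recognizing $\beta$ as the braiding produced by Lemma~\ref{lem:B} from the quantum heap operation $T(x \otimes y \otimes z) = xS(y)z$. First I would verify the hypotheses of Lemma~\ref{lem:B} for this $T$: it is TSD by Lemma~\ref{lem:T}, and it is an invertible TSD morphism by Lemma~\ref{lem:qheapinv}. Both of these, together with $T$ being a coalgebra morphism (which follows from $\mu$ being a coalgebra morphism via the bialgebra compatibility and from $\Delta S = (S \otimes S)\Delta$, i.e. $S$ a coalgebra morphism, once $\tau\Delta = \Delta$), rely on $\Delta$ being cocommutative; recalling the convention of Section~\ref{sec:Pre} that a cocommutative Hopf algebra is automatically involutory, this is the hypothesis actually in use.

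Next I would check that the displayed map is precisely this specialization. Put $V = X \otimes X$ with the tensor coalgebra structure and view the input $x \otimes y \otimes z \otimes w$ as $(x \otimes y) \otimes (z \otimes w) \in V \otimes V$ (so that $\beta$ is really a map $V^{\otimes 2} \to V^{\otimes 2}$). Matching the first $V$-factor with $(x,y)$ and the second with $(z,w)$, Lemma~\ref{lem:B} returns
\[
z^{(1)} \otimes w^{(1)} \otimes T(x \otimes z^{(21)} \otimes w^{(21)}) \otimes T(y \otimes z^{(22)} \otimes w^{(22)}),
\]
and applying coassociativity to replace $z^{(1)} \otimes z^{(21)} \otimes z^{(22)}$ by $z^{(1)} \otimes z^{(2)} \otimes z^{(3)}$ (and similarly for $w$) and substituting $T(a\otimes b\otimes c) = aS(b)c$ turns this into
\[
z^{(1)} \otimes w^{(1)} \otimes xS(z^{(2)})w^{(2)} \otimes yS(z^{(3)})w^{(3)},
\]
which is exactly the formula in the statement.

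Finally, Lemma~\ref{lem:B} supplies both the Yang-Baxter equation and the explicit inverse for this $\beta$, whence $\beta$ is an invertible solution of the YBE, that is, a Yang-Baxter operator. I do not anticipate a genuine computational obstacle: the YBE and invertibility are inherited wholesale from Lemma~\ref{lem:B} (whose invertibility itself reduces, through $\beta = (\beta_1 \otimes \mathbb 1)(\mathbb 1 \otimes \beta_1)$, to Lemma~\ref{lem:singletypeII}). The only delicate point is hypothesis-tracking — lining up the Sweedler indices under coassociativity and confirming that cocommutativity, on which both Lemma~\ref{lem:qheapinv} and Lemma~\ref{lem:B} depend, is the true carrier of the content, with the involutory condition entering only to make $T(x\otimes y\otimes z)=xS(y)z$ TSD.
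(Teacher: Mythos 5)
Your proposal matches the paper's own proof, which likewise obtains the lemma by specializing Lemma~\ref{lem:B} to the quantum heap operation $T(x\otimes y\otimes z)=xS(y)z$ of Lemma~\ref{lem:T} and deduces invertibility from Lemma~\ref{lem:qheapinv}. Your additional bookkeeping point --- that cocommutativity, rather than the stated involutory hypothesis alone, is what Lemmas~\ref{lem:qheapinv} and~\ref{lem:B} actually require --- is correct and is left implicit in the paper.
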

\begin{proof}
	The statement follows directly by applying Lemma~\ref{lem:B} to the quantum heap construction of Lemma~\ref{lem:T}.
	The invertibility follows from Lemma~\ref{lem:qheapinv}.
\end{proof}

\section{Braidings and pairings in quantum heaps}\label{sec:Braid}

In this section we introduce pairings and copairings that commute with braiding constructed 
in the preceding section. We construct such (co)pairing using integrals of Hopf algebras.

\begin{figure}[htb]
\begin{center}
\includegraphics[width=1.5in]{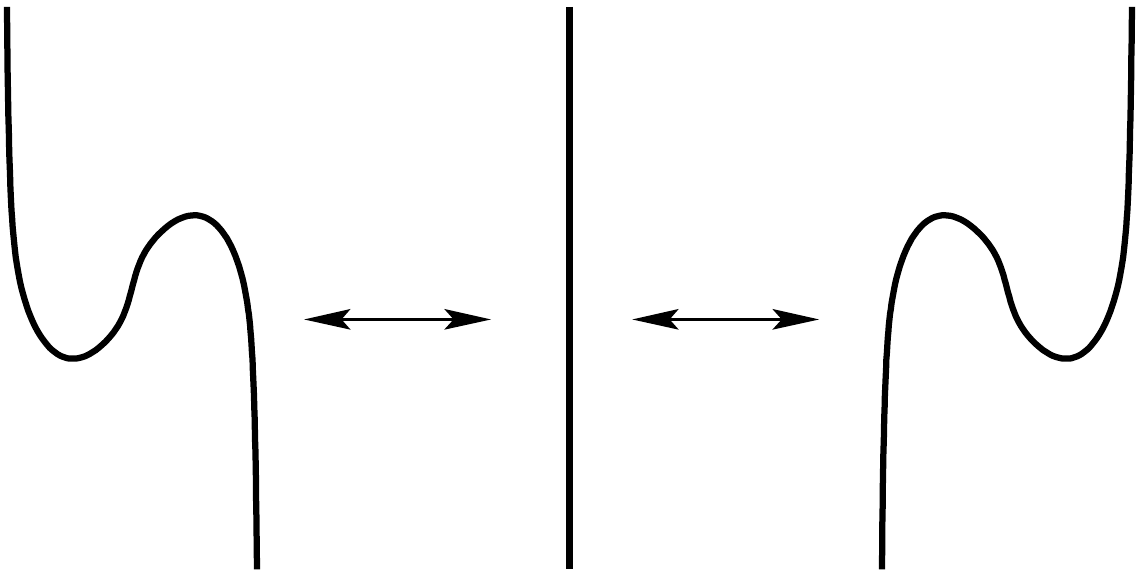}
\end{center}
\caption{The switchback  property}
\label{switchback}
\end{figure}

\begin{definition}
{\rm 
A pairing $\cup : V \otimes V \rightarrow \mathbb{k} $ and 
a copairing $\cap: {\mathbb k} \rightarrow V \otimes V$ in a  module $V$ over a unital ring $\mathbb{ k}$ are said to have (or satisfy) the {\it switchback property}
if they satisfy the equalities
$$(\cup \otimes {\mathbb 1}) ({\mathbb 1} \otimes \cap)={\mathbb 1} 
= ({\mathbb 1} \otimes \cup ) ( \cap  \otimes {\mathbb 1} ). $$
}
\end{definition}

The conditions imply  that $\cup$ is non-singular.

\begin{figure}[htb]
\begin{center}
\includegraphics[width=2.5in]{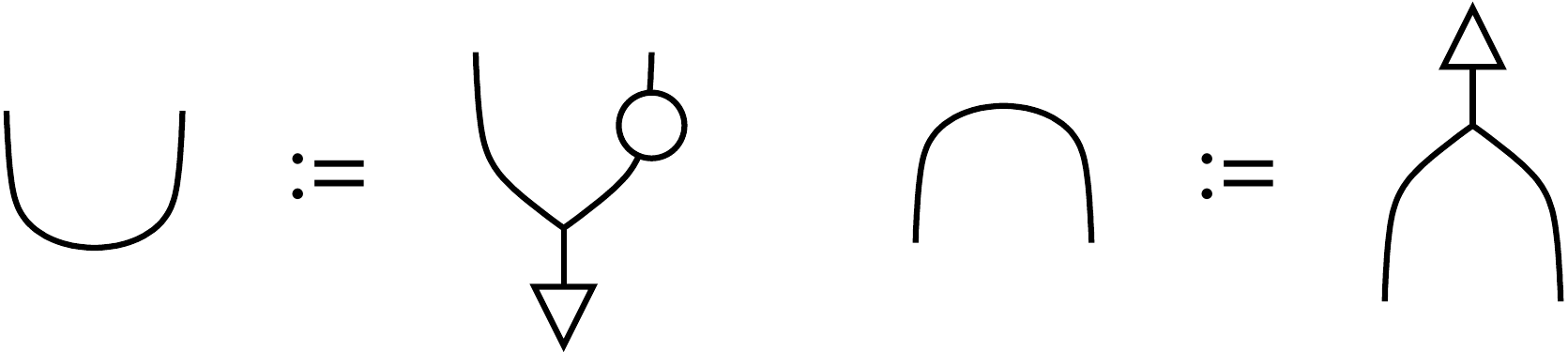}
\end{center}
\caption{Defining cup and cap by left (co)integrals}
\label{cupcap}
\end{figure}

\begin{definition}\label{def:cupcap}
{\rm 
Let  $(X,\mu, \eta, \Delta, \epsilon, S)$ be a
 finitely generated projective Hopf algebra over a (unital) ring $\mathbb k$.
Then $X$ has an integral and a cointegral \cite{Par}. Let us indicate them by $\lambda$ and $\gamma$, respectively. 
We define a cup on $X$ by 
 $\cup : = \lambda\mu(\mathbb 1\otimes S)$ and $\cap :=\Delta \gamma$, as depicted in Figure~\ref{cupcap}. 
 }
 \end{definition}

For a Hopf algebra $(X,\mu, \eta, \Delta, \epsilon, S)$,
the following module  $P(H^*)$  was considered in \cite{Par}.
Let $\chi: X^* \rightarrow X^* \otimes X$ be a right $X$-comodule structure on $X^*$ 
defined by the left $H^*$-module structure. 
Then $P(H^*)$ was defined by
$P(H^*)=\{ x^* \in X^* \mid \chi(x^*)=x^* \otimes 1 \}$.

\begin{lemma} 
	\label{lem:switchback}
	Let  $(X,\mu, \eta, \Delta, \epsilon, S)$ be a finitely generated projective Hopf algebra over a ring $\mathbb k$, 
	 such that $P(X^*)\cong {\mathbb k}$, 
	and $\cup$, $\cap$ be as  in Definition~\ref{def:cupcap}.
	Then $\cup$ and $\cap$ satisfy the switchback property. 
\end{lemma}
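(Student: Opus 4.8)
The plan is to make the two switchback equalities completely explicit and recognize them as the snake (zig-zag) identities that exhibit $X$ as self-dual under the (co)integral pairing. By Definition~\ref{def:cupcap} we have $\cup(a \otimes b) = \lambda(a\,S(b))$ and $\cap(1) = \Delta\gamma = \gamma^{(1)} \otimes \gamma^{(2)}$, where $\gamma$ is the integral element and $\lambda$ the cointegral functional. Evaluating the two composites on a simple tensor $x$ gives
\[
(\cup \otimes {\mathbb 1})({\mathbb 1} \otimes \cap)(x) = \lambda\bigl(x\,S(\gamma^{(1)})\bigr)\,\gamma^{(2)},
\qquad
({\mathbb 1} \otimes \cup)(\cap \otimes {\mathbb 1})(x) = \gamma^{(1)}\,\lambda\bigl(\gamma^{(2)}\,S(x)\bigr),
\]
so the lemma reduces to showing that both expressions equal $x$ for every $x \in X$.

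First I would fix the normalization of the pair $(\lambda,\gamma)$. Since $X$ is finitely generated projective with $P(X^*) \cong {\mathbb k}$, the integral and cointegral spaces are each free of rank one by \cite{Par}, so $\lambda$ and $\gamma$ are each determined up to a unit; I would choose them so that $\lambda\bigl(x\,S(\gamma)\bigr) = \epsilon(x)$ for all $x$ (in particular $\lambda(S(\gamma)) = 1$), which is exactly the scalar identity forced by applying $\epsilon$ to the first display above. Establishing this normalization is itself the key preliminary step: starting from the fundamental integral relation
\[
a^{(1)}\gamma^{(1)} \otimes a^{(2)}\gamma^{(2)} = \epsilon(a)\,\gamma^{(1)} \otimes \gamma^{(2)},
\]
obtained by applying $\Delta$ to the absorption rule $a\gamma = \epsilon(a)\gamma$, together with the antipode and counit axioms, one checks that pairing against $S(\gamma)$ under $\lambda$ collapses to $\epsilon$.

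With the normalization in hand, the heart of the argument is to show that the endomorphism $\Phi(x) = \lambda\bigl(x\,S(\gamma^{(1)})\bigr)\gamma^{(2)}$ is the identity. The cleanest route is structural: equip $X$ with suitable regular module and comodule (Hopf-module) structures and verify, using the integral property of $\gamma$, the cointegral property of $\lambda$, and the antipode axiom of Figure~\ref{mutwist}, that $\Phi$ intertwines both. By the Fundamental Theorem of Hopf modules --- the same mechanism underlying the rank-one statement $P(X^*) \cong {\mathbb k}$ in \cite{Par} --- every such endomorphism of the regular Hopf module is multiplication by an element of the coinvariants $\cong {\mathbb k}$, hence a scalar, and the normalization $\lambda(x\,S(\gamma)) = \epsilon(x)$ then pins that scalar to $1$, giving $\Phi = {\mathbb 1}$. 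The second equality is handled by the mirror-image argument, using that $S(\gamma)$ is a right integral and that $S$ is bijective for finitely generated projective Hopf algebras, so that one transports the first computation by applying $S$. Diagrammatically each equality is a straightening of a zig-zag, with the (co)integral absorption rules of Figure~\ref{integ} and the antipode rule doing the simplification.

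The main obstacle I anticipate is precisely this scalar step: identifying the correct (possibly twisted) module and comodule structures for which $\Phi$ is genuinely a Hopf-module morphism --- note that $\Phi$ is \emph{not} intertwining for the naive left regular action, since $a\,\Phi(x) = \lambda(x S(\gamma^{(1)}))\,a\gamma^{(2)}$ need not equal $\Phi(ax)$ --- and then checking that the single normalization of $(\lambda,\gamma)$ is simultaneously compatible with both snake identities rather than only the first. Placing the antipode in $\cup = \lambda\mu({\mathbb 1} \otimes S)$ so that it matches the intertwining conditions is where the computation is most delicate.
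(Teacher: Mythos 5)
Your route is genuinely different from the paper's. The paper disposes of this lemma in two sentences: it cites \cite{Par} for the fact that some integral $\lambda$ and cointegral $\gamma$ make the pair $\cup'=\lambda\mu$, $\cap'=(S\otimes{\mathbb 1})\Delta\gamma$ satisfy the switchback identities, and then observes that $\cup=\cup'({\mathbb 1}\otimes S)$ and $\cap=(S^{-1}\otimes{\mathbb 1})\cap'$ differ from that pair only by antipodes that cancel inside the two composites (for the second identity one substitutes $x\mapsto S(x)$ into Pareigis's identity and applies $S^{-1}$, using that $S$ is bijective for finitely generated projective Hopf algebras). You instead set out to re-prove the underlying Larson--Sweedler/Pareigis theorem from the Fundamental Theorem of Hopf modules. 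That is the standard proof of the cited result, so the strategy is sound and your explicit formulas for the two composites are correct; what your version would buy is self-containedness, at the cost of redoing the hardest part of \cite{Par}.

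As written, however, the argument has two genuine gaps, both of which you partly concede. First, the decisive step --- exhibiting the (twisted) module and comodule structures on $X$ for which $\Phi(x)=\lambda\bigl(x\,S(\gamma^{(1)})\bigr)\gamma^{(2)}$ is a morphism of Hopf modules --- is never carried out; you correctly note that the naive left regular action fails, but you do not say what replaces it, and without that the appeal to the Fundamental Theorem has no content. Second, the normalization $\lambda\bigl(x\,S(\gamma)\bigr)=\epsilon(x)$ is justified only as ``the scalar identity forced by applying $\epsilon$ to the first display,'' i.e.\ it is read off from the conclusion you are trying to prove; to use it as an input you need an independent derivation from the absorption rules for $\lambda$ and $\gamma$, and that derivation is itself the nontrivial Larson--Sweedler computation, which you only gesture at. You would also need to verify that one choice of $(\lambda,\gamma)$ makes both snake identities hold simultaneously: rescaling buys you one of them, and the real content is that the other then holds with no remaining freedom. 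Since the hypotheses of the lemma are exactly those of \cite{Par}, the economical fix is the paper's: quote the switchback property for $(\cup',\cap')$ and do the antipode bookkeeping.
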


\begin{proof}
	In \cite{Par}, it is proved, under the assumptions, that there exists an integral $\lambda$ and 
	cointegral $\gamma$ such that $\cup'=\lambda \mu$ and $\cap' = (S \otimes {\mathbb 1}) \Delta \gamma$ satisfy the switchback property. It then follows that so do $\cup$ and $\cap$ in Definition~\ref{def:cupcap} as well. 
\end{proof}

Since we use this lemma extensively from here forward, we will assume that 
every Hopf algebra satisfies the assumption of this lemma. As pointed out in \cite{Par}, the condition that $P(H^*) \cong \mathbb k$ is automatically satisfied when ${\rm pic}(\mathbb k) = 0$. This is the case for instance when $\mathbb k$ is a PID or a local ring. In particular, one obtains the result of Larson and Sweedler in \cite{LS}, where the ground ring is taken to be a PID.



\begin{figure}[htb]
\begin{center}
\includegraphics[width=1.5in]{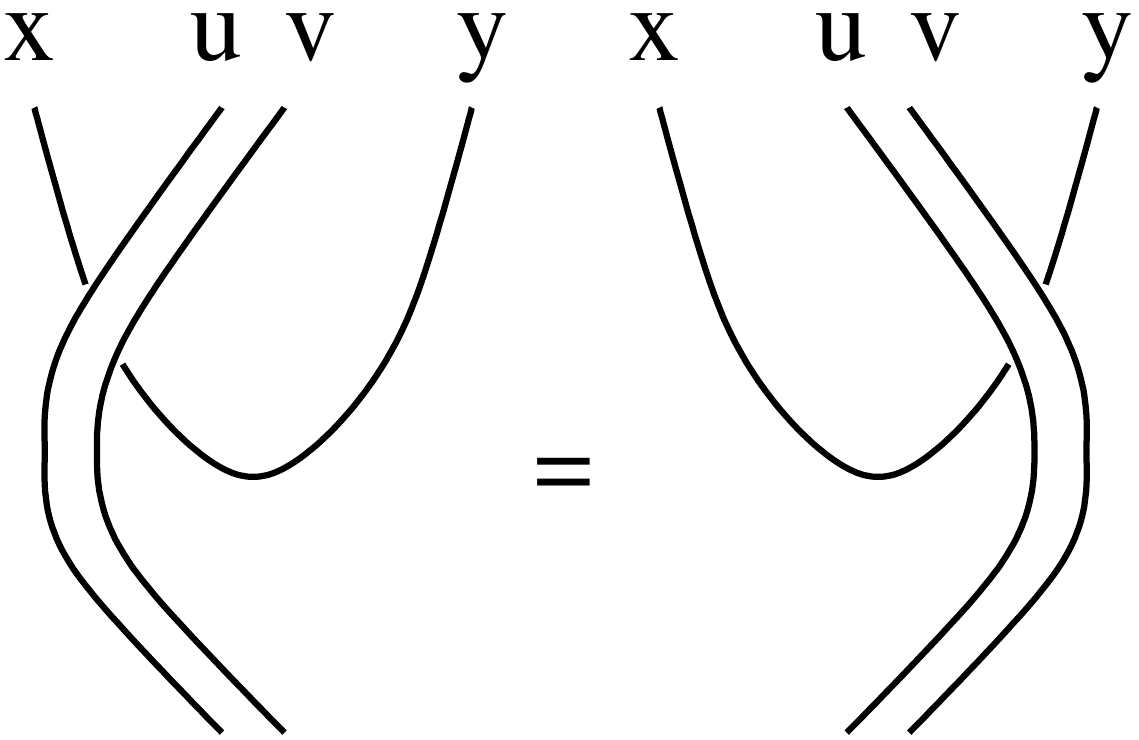}
\end{center}
\caption{The passcup property}
\label{passcup}
\end{figure}

\begin{definition}
{\rm  
Let $V$ be a coalgebra over a
unital ring $\mathbb{ k}$, with ternary morphism (of coalgebras)
$T: V^{\otimes 3} \rightarrow V$. 
 A pairing $\cup : V \otimes V \rightarrow \mathbb{k}$ is said to have (or satisfy) the {\it passcup  property with respect to $T$} if 
it satisfies 
$$ ( {\mathbb 1}^{\otimes 2} \otimes \cup  ) 
( u^{(1)} \otimes v^{(1)} \otimes T(x \otimes u^{(2)} \otimes  v^{(2)} ) \otimes y)
=( \cup \otimes {\mathbb 1}^{\otimes 2}  ) 
(x \otimes T(y \otimes v^{(2)} \otimes u^{(2)} ) \otimes u^{(1)} \otimes v^{(1)} ) $$
for all $x, y, u, v \in V$.
}
\end{definition}

The passcup property  is depicted in Figure~\ref{passcup}.

\begin{lemma} \label{lem:passcup}
Let  $(X,\mu, \eta, \Delta, \epsilon, S)$ be a cocommutative 
Hopf algebra. 
Then the pairing $\cup$ defined in Definition~\ref{def:cupcap}  satisfies the passcup property with respect to 
 the  TSD defined in Lemma~\ref{lem:T}. 
\end{lemma}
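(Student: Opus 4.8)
The plan is to verify the passcup identity directly on simple tensors, substituting the explicit formulas for $\cup$ and $T$ and then collapsing everything with the antipode axioms. Recall from Lemma~\ref{lem:T} that $T(x \otimes y \otimes z) = x S(y) z$, and from Definition~\ref{def:cupcap} that $\cup = \lambda \mu (\mathbb 1 \otimes S)$, so that $\cup(a \otimes b) = \lambda( a S(b) )$ for $a,b \in X$. Since $X$ is cocommutative it is involutory, hence $S^2 = \mathbb 1$, and I will also use the antipode anti-homomorphism property $S \mu \tau = \mu (S \otimes S)$ recalled earlier.

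First I would expand the left-hand side. Applying $\mathbb 1^{\otimes 2} \otimes \cup$ to $u^{(1)} \otimes v^{(1)} \otimes T(x \otimes u^{(2)} \otimes v^{(2)}) \otimes y$ pairs the last two factors, and substituting $T$ yields
$$ \sum u^{(1)} \otimes v^{(1)} \cdot \lambda\bigl( x\, S(u^{(2)})\, v^{(2)}\, S(y) \bigr), $$
the scalar $\lambda(\cdots)$ multiplying the surviving tensor $u^{(1)} \otimes v^{(1)}$.

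Next I would expand the right-hand side. Here $\cup \otimes \mathbb 1^{\otimes 2}$ pairs $x$ with $T(y \otimes v^{(2)} \otimes u^{(2)}) = y S(v^{(2)}) u^{(2)}$, producing the scalar $\lambda\bigl( x\, S( y S(v^{(2)}) u^{(2)} ) \bigr)$ times $u^{(1)} \otimes v^{(1)}$. The decisive step is to simplify the nested antipode: using $S(abc) = S(c) S(b) S(a)$ together with $S^2 = \mathbb 1$ one obtains $S( y S(v^{(2)}) u^{(2)} ) = S(u^{(2)})\, v^{(2)}\, S(y)$, whence the right-hand side equals
$$ \sum \lambda\bigl( x\, S(u^{(2)})\, v^{(2)}\, S(y) \bigr) \cdot u^{(1)} \otimes v^{(1)}. $$
This is identical to the left-hand side, because the coproducts of $u$ and $v$ are split the same way on both sides (the $(2)$-legs feed the pairing, the $(1)$-legs survive) and a scalar commutes past the tensor product.

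The argument is short, and the only place I expect to need care is exactly the manipulation of the nested antipode $S( y S(v^{(2)}) u^{(2)} )$, where both order reversal and the involutivity $S^2 = \mathbb 1$ must be invoked to align the two sides; this is precisely where the cocommutativity hypothesis is used. It is worth noting that the defining equation of the (co)integral $\lambda$ plays no role here, the identity being purely a matter of antipode bookkeeping, so the same equality can equally be read off Figure~\ref{passcup} by sliding the pairing past the crossing.
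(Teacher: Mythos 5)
Your computation is correct, and it verifies the passcup identity exactly as stated in the definition: with $\cup(a\otimes b)=\lambda(aS(b))$ and $T(a\otimes b\otimes c)=aS(b)c$, both sides reduce to $\lambda\bigl(x\,S(u^{(2)})\,v^{(2)}\,S(y)\bigr)\cdot u^{(1)}\otimes v^{(1)}$, the only inputs being the anti-homomorphism property $S(abc)=S(c)S(b)S(a)$ and the involutivity $S^2=\mathbb 1$ (which is where cocommutativity enters, as you say). This is, however, a genuinely different route from the paper's. The paper argues diagrammatically (Figure~\ref{passcupproof}): it rewrites the negative crossing via the inverse of the quantum heap operation, then uses naturality of the switching map, the compatibility $\Delta S=(S\otimes S)\tau\Delta$, and --- crucially in its last step --- the fact that $\lambda$ is simultaneously a left and a right integral. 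Your direct substitution shows that, for the algebraic formula defining the passcup property, the integral property of $\lambda$ is not needed at all: any linear functional in place of $\lambda$ would satisfy the same identity, since the two scalars agree before $\lambda$ is even applied. What your approach buys is a shorter and more transparent argument that isolates the exact hypotheses used (antipode bookkeeping plus involutivity); what the paper's diagrammatic chain buys is a template that generalizes to the other sliding moves in Lemma~\ref{lem:cupcapbraid}, where the integral property genuinely is required. One small caution: the ``scalar'' $\lambda(\cdots)$ is summed together with the surviving legs $u^{(1)}\otimes v^{(1)}$ through the implicit Sweedler summation, so the phrase ``a scalar commutes past the tensor product'' should be understood termwise within that sum; as written your manipulation is termwise and therefore valid.
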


\begin{proof}
In order to prove the passcup property, we proceed as in Figure~\ref{passcupproof}. The first equality corresponds to rewriting one negative crossing using the definition of inverse of quantum heap operation $T$, the first arrow utilizes naturality of the switching map $X\otimes X \rightarrow X\otimes X$, the second arrow corresponds to the compatibility relation between the antipode $S$ and the comultiplication $\Delta$ of $X$, the third arrow is given by redrawing the diagram using naturality of switching map, the fourth arrow corresponds to the fact that $\lambda$ is both a right and left integral. Involutority is used at Step (3). 
This completes the proof of the passcup property.  
\end{proof}

\begin{figure}[htb]
\begin{center}
\includegraphics[width=6in]{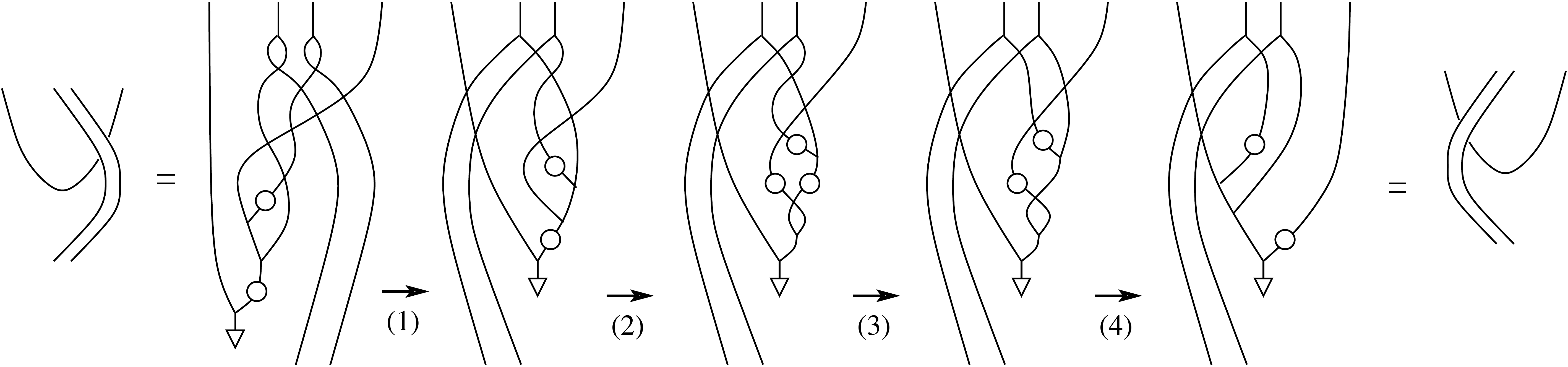}
\end{center}
\caption{Proof of the passcup property}
\label{passcupproof}
\end{figure}

\begin{lemma}\label{lem:cupcapbraid}
Let  $(X,\mu, \eta, \Delta, \epsilon, S)$ be a commutative and cocommutative Hopf algebra and set $V=X \otimes X$.
Then the  cup and cap defined in Definition~\ref{cupcap}
commute with the braiding defined in Lemma~\ref{lem:quantumbraid}.
Specifically, it holds that 
$({\mathbb 1} \otimes \cup) \beta = \cup \otimes {\mathbb 1} $  and 
$(\cup \otimes {\mathbb 1} ) \beta = {\mathbb 1} \otimes \cup$
as morphisms $V\otimes V \rightarrow V$, 
$({\mathbb 1} \otimes \cap) \beta = \cap \otimes {\mathbb 1} $ and 
$\beta (\cap \otimes {\mathbb 1} )=  {\mathbb 1} \otimes \cap$
as morphisms  $V \rightarrow V \otimes V$.
\end{lemma}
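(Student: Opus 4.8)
The plan is to verify the four identities directly on simple tensors, exploiting the fact that for a commutative and cocommutative Hopf algebra the braiding $\beta$ of Lemma~\ref{lem:quantumbraid} and the (co)pairings of Definition~\ref{def:cupcap} simplify considerably. First I would observe that the four statements come in two dual pairs: the two cup identities are morphisms $V^{\otimes 2}\to V$, and the two cap identities are their formal mirror images, obtained by reflecting diagrams vertically and exchanging the roles of $(\mu,\eta,\lambda)$ with $(\Delta,\epsilon,\gamma)$. So I would prove the first cup identity $({\mathbb 1}\otimes\cup)\beta=\cup\otimes{\mathbb 1}$ in full, then indicate that the remaining three follow by entirely parallel computations (or by applying the switchback property of Lemma~\ref{lem:switchback} to pass between a cup identity and the corresponding cap identity).

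For the main computation I would unwind both sides on a simple tensor $(x\otimes x')\otimes(y\otimes z)\in V\otimes V$. On the left, I first apply $\beta$ using its definition from Lemma~\ref{lem:quantumbraid} (with $T(a\otimes b\otimes c)=aS(b)c$), producing an expression of the form $(y^{(1)}\otimes z^{(1)})\otimes(xS(y^{(2)})z^{(2)}\otimes x'S(y^{(3)})z^{(3)})$, and then apply ${\mathbb 1}\otimes\cup$, where $\cup=\lambda\mu({\mathbb 1}\otimes S)$ acts on the second $V$-factor. The key algebraic steps are then: use cocommutativity to reorganize the iterated comultiplications $y^{(1)}\otimes y^{(2)}\otimes y^{(3)}$ and $z^{(1)}\otimes z^{(2)}\otimes z^{(3)}$; use the antipode relation $S\mu\tau=\mu(S\otimes S)$ together with $S^2={\mathbb 1}$ (involutory, automatic by Kassel III.3.4 under (co)commutativity); and finally use the defining property of the integral $\lambda$, namely $a\lambda=\epsilon(a)\lambda$, to collapse the surviving factors against a counit. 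This should reduce the left side precisely to $\cup\otimes{\mathbb 1}$ evaluated on the same input.

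Diagrammatically, this is exactly the content of the passcup property (Lemma~\ref{lem:passcup}) combined with the switchback property (Lemma~\ref{lem:switchback}): sliding the cup past the crossing $\beta$ via passcup, then using switchback to straighten the resulting cup-cap composite, yields the asserted relocation of $\cup$ from one side to the other. So an alternative and cleaner route I would favor is to prove the lemma \emph{diagrammatically}, reading $({\mathbb 1}\otimes\cup)\beta$ as the picture where a cup caps off the two output strands of a crossing, applying passcup to move the cup through, and invoking switchback to remove the remaining turn-back; symmetric reflections of this single diagram chain dispose of all four equalities at once.

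The main obstacle I anticipate is bookkeeping rather than conceptual difficulty: because $V=X\otimes X$ and each of $\mu$, $\Delta$, $S$ already carries its own Sweedler indices, the braiding on $V^{\otimes 2}$ involves several layers of comultiplication on each of $y$ and $z$, and one must track these carefully while commuting factors past one another using cocommutativity. Getting the antipode cancellations to line up so that the integral property applies with the correct counit landing on the correct tensor slot is where an error is most likely to creep in; the diagrammatic proof largely sidesteps this by letting passcup and switchback do the index-matching implicitly, which is why I would present the proof pictorially and relegate the explicit Sweedler computation to a verification that each pictured move is a genuine Hopf-algebra identity already established above.
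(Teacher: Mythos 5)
Your overall strategy matches the paper's: the paper also proves $({\mathbb 1}\otimes\cup)\beta=\cup\otimes{\mathbb 1}$ diagrammatically by first applying the passcup property (Lemma~\ref{lem:passcup}), and handles the remaining three identities by direct manipulations with the Hopf algebra axioms and the (co)integral conditions. Your explicit Sweedler computation for the first identity is also sound: after expanding $\beta$ and $\cup=\lambda\mu({\mathbb 1}\otimes S)$, the relations $S\mu\tau=\mu(S\otimes S)$, $S^2={\mathbb 1}$ and the antipode axiom collapse the $y$- and $z$-legs exactly as you describe.

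There is, however, one concretely wrong step in the route you say you would favor. After passcup slides the cup through the positive crossing, what remains is \emph{not} a cup--cap zig-zag; it is a positive crossing composed with the negative crossing that passcup introduces (the term $T(y\otimes v^{(2)}\otimes u^{(2)})$ in its statement). The switchback property $(\cup\otimes{\mathbb 1})({\mathbb 1}\otimes\cap)={\mathbb 1}$ has nothing to cancel here, since no cap appears. The correct second step is Lemma~\ref{lem:singletypeII} (equivalently, the invertibility of $T$ from Lemma~\ref{lem:qheapinv}), i.e.\ the type~II move cancelling $\beta_1^{-1}\beta_1$; this is exactly what the paper invokes. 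Relatedly, your claim that the other three identities follow ``by entirely parallel computations'' or by dualizing through switchback undersells the work: the paper's verifications are not uniform mirror images. The identity $(\cup\otimes{\mathbb 1})\beta={\mathbb 1}\otimes\cup$ is the one that genuinely needs the integral property of $\lambda$ together with \emph{commutativity} of $\mu$ (not just cocommutativity), and the two cap identities additionally use $S\eta=\eta$ and $\epsilon S=\epsilon$; moreover no passcap lemma is available in the paper, so the cap identities cannot be disposed of by the same one-line diagram chain. Your fallback of checking each identity directly on simple tensors would still succeed, but you should name the right cancellation lemma and acknowledge that each of the four equalities draws on a slightly different subset of the hypotheses.
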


\begin{proof}
Diagrammatic sketch proofs are found in Figures~\ref{counitb2} through \ref{unitb1}.
In Figure~\ref{counitb2}, 
$({\mathbb 1} \otimes \cup) \beta = \cup \otimes {\mathbb 1} $ is proved by Lemmas~\ref{lem:passcup} and \ref{lem:singletypeII} successively.
Other equalities are proved as depicted, using Hopf algebra axioms and the definition of integrals.
In Figure~\ref{counitb1}, other than axioms, commutativity is used in the 4th equality, and cocommutativity is used in the 5th equality. While the diagrams in Figures~\ref{counitb1} and~\ref{unitb2} treat the single-stranded case, an iteration of the diagrammatic proof implies the case with two edges sliding.
 Cocommutativity is used in the 3rd and 5th equalities in Figure~\ref{unitb2},
and  3rd equality in  Figure~\ref{unitb1}. Observe that the equalities $S\eta=\eta$ and $\epsilon S=\epsilon$, which are consequences of $S$ being an anti-homomorphism, have been used. 
\end{proof}

\begin{figure}[htb]
\begin{center}
\includegraphics[width=3in]{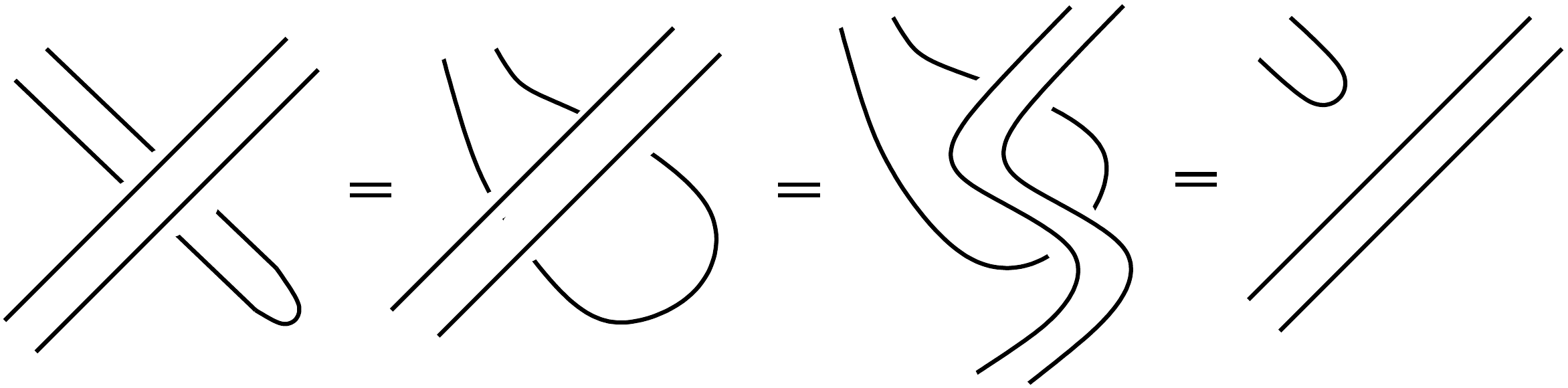}
\end{center}
\caption{Proof of $({\mathbb 1} \otimes \cup) \beta = \cup \otimes {\mathbb 1} $}
\label{counitb2}
\end{figure}

\begin{figure}[htb]
\begin{center}
\includegraphics[width=3.5in]{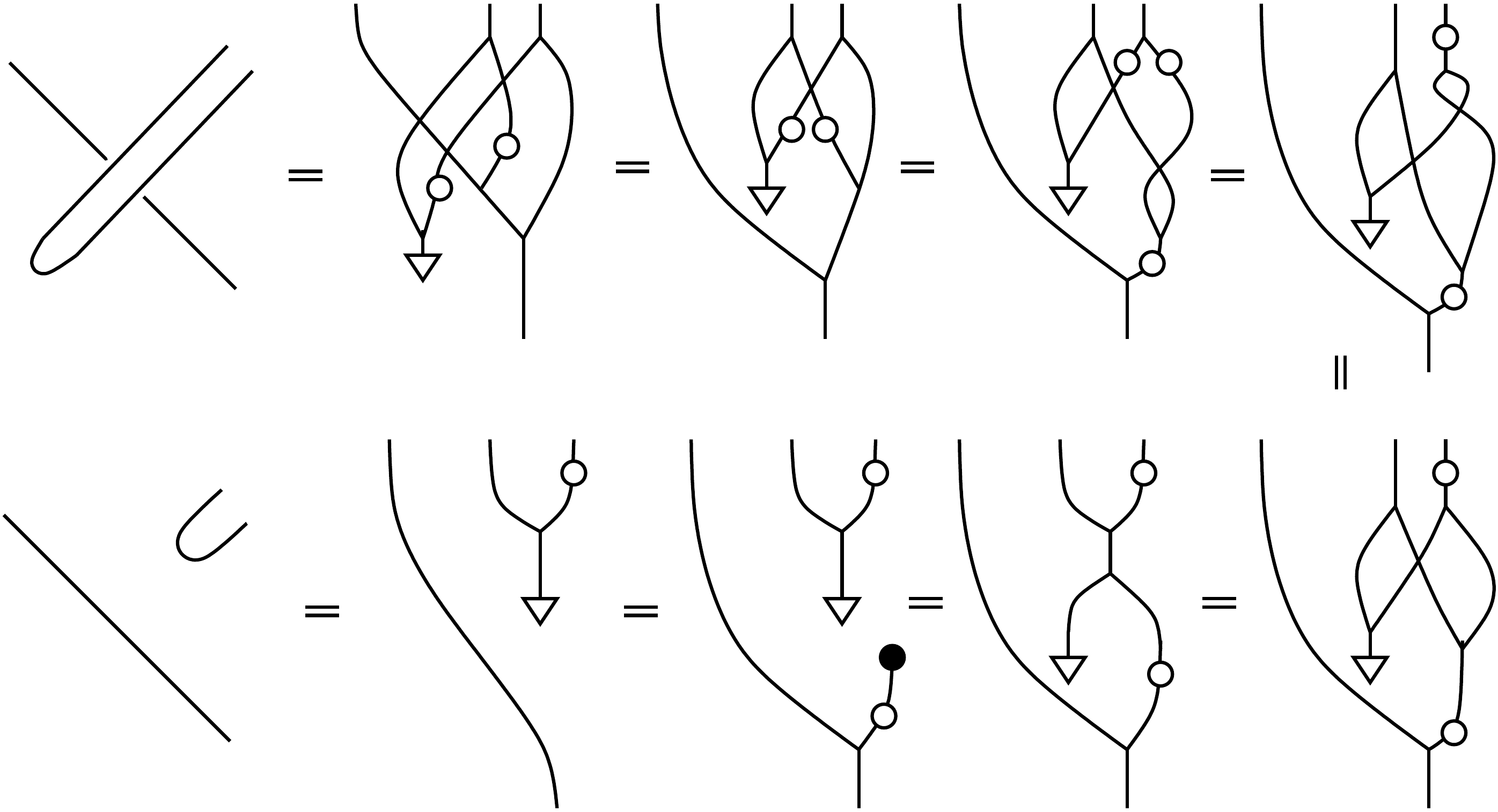}
\end{center}
\caption{Proof of $(\cup \otimes {\mathbb 1} ) \beta = {\mathbb 1} \otimes \cup$}
\label{counitb1}
\end{figure}

\begin{figure}[htb]
\begin{center}
\includegraphics[width=3in]{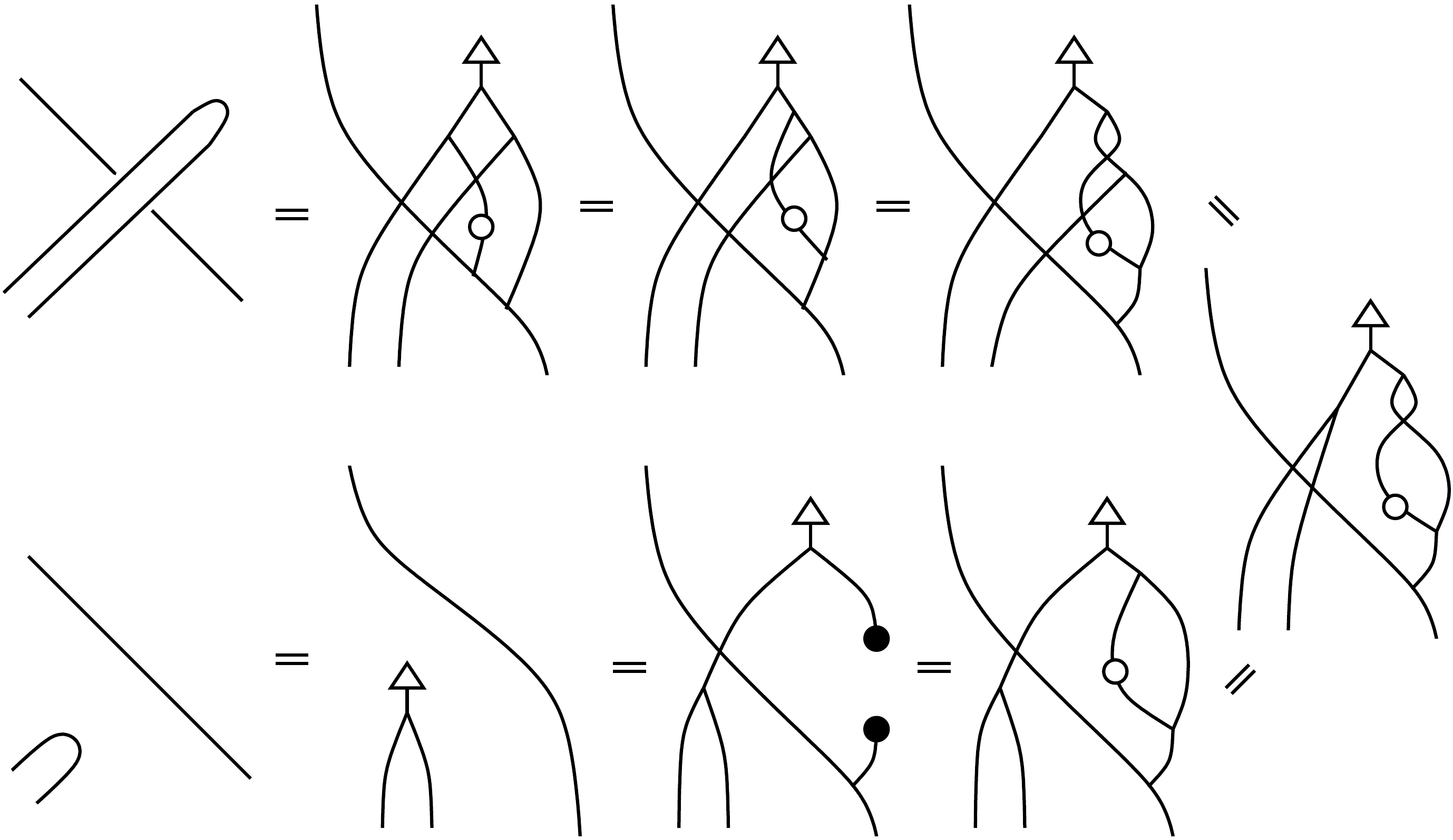}
\end{center}
\caption{Proof of $({\mathbb 1} \otimes \cap) \beta = \cap \otimes {\mathbb 1} $}
\label{unitb2}
\end{figure}

\begin{figure}[htb]
\begin{center}
\includegraphics[width=3.5in]{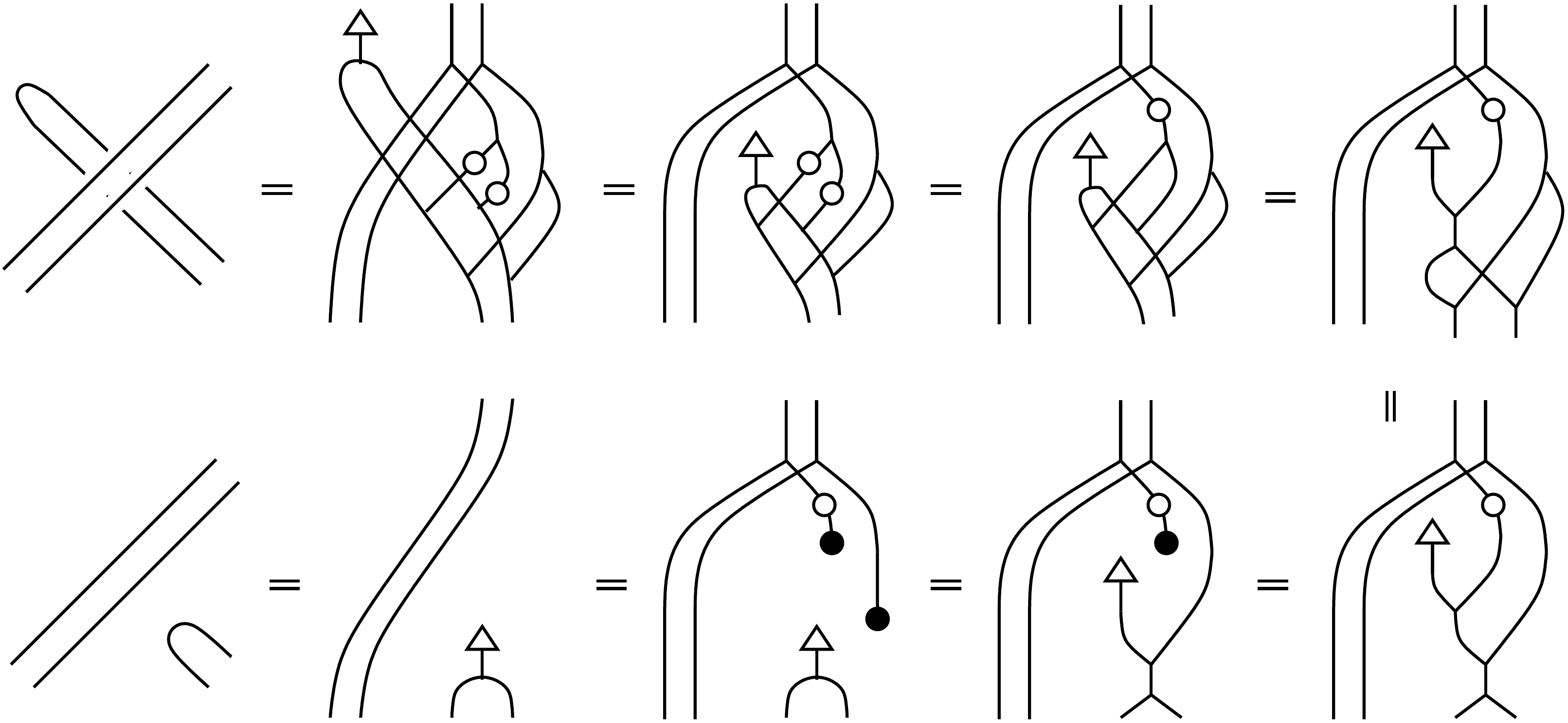}
\end{center}
\caption{Proof of $\beta (\cap \otimes {\mathbb 1} )=  {\mathbb 1} \otimes \cap$}
\label{unitb1}
\end{figure}

\section{Construction of braided Frobenius algebras}\label{sec:BraidFrob}

In \cite{Comeau}, a {\it braided Frobenius object} is defined to be a Frobenius object in a braided monoidal category. 
A monoidal category has (tensor) products among objects, with some other data and conditions,
such as unitors and associators, corresponding to units and associativity of algebras.
A monoidal category is strict if its associators and left/right unitors  are identity natural transformations.
A braided monoidal category has a braiding between (tensor) products  of  two objects, that are functorial. 
Hence it is natural to define a braided Frobenius algebra to be 
a Frobenuis object in the braided strict monoidal category of  
finitely generated modules over a unital ring.  
This definition is equivalent to having a braiding $\beta$ that commutes with all defining data of a Frobenius algebra, that corresponds to functoriality of braiding.

\begin{definition} {\rm (cf. \cite{Comeau})} 
 \label{def:braidFrob}
{\rm 
A braided Frobenius algebra is 
a Frobenuis object in the braided strict monoidal category of  
finitely generated projective modules over a unital ring.
Specifically, a braided Frobenius algebra is a Frobenius algebra $X=(V, \mu, \eta, \Delta, \epsilon)$  (multiplication, unit, comltiplication, counit)  over 
 unital ring ${\mathbb k}$, which commute with the braiding, as follows:
\begin{center}
$
\begin{array}{cc}
(\mu\otimes \mathbb 1) ({\mathbb 1} \otimes  \beta  )( \beta \otimes {\mathbb 1}) = \beta\otimes  (\mathbb 1\otimes \mu) , &
(\mathbb 1\otimes \mu) ( \beta \otimes {\mathbb 1}) ({\mathbb 1} \otimes  \beta  ) = \beta \otimes (\mu\otimes {\mathbb 1}) ,  \\
(\Delta \otimes {\mathbb 1})  \beta  =( \beta \otimes {\mathbb 1})  (\beta \otimes { \mathbb 1}) ( {\mathbb 1} \otimes \Delta )  , &
( { \mathbb 1} \otimes \Delta)  \beta  =
 (\beta \otimes { \mathbb 1}) ( \beta \otimes {\mathbb 1}) ( \Delta \otimes  {\mathbb 1}) ,\\
({\mathbb 1} \otimes \eta) \beta = \eta \otimes {\mathbb 1}, & 
\beta (\eta \otimes {\mathbb 1} )=  {\mathbb 1} \otimes \eta ,   \\\
({\mathbb 1} \otimes \epsilon) \beta = \epsilon \otimes {\mathbb 1}, & 
(\epsilon \otimes {\mathbb 1} ) \beta = {\mathbb 1} \otimes \epsilon . 
\end{array}
$
\end{center}

}
\end{definition}

\begin{figure}[htb]
\begin{center}
\includegraphics[width=5in]{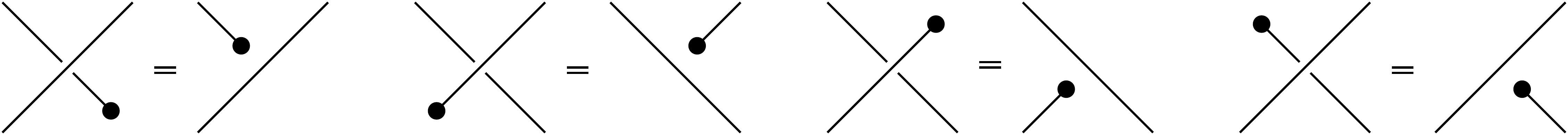}
\end{center}
\caption{Commutation of (co)unit and braiding}
\label{unitbraid}
\end{figure}

The commuting conditions for a braided Frobenius algebra for multiplication are depicted in Figure~\ref{BF}. Those for comultiplication are represented by the upside down diagrams.
The commuting conditions for the (co)unit are depicted in Figure~\ref{unitbraid}. 

We now proceed to construct a family of braided Frobenius algebras from
a class of Hopf algebras. 
We mention that the monoid structure in the next theorem also appears in Sections 4 and 5 of \cite{HV}, under the name of {\it pair of pants monoid}, for dagger pivotal categories. In our construction, the fact that Frobenius monoids (e.g. algebras) are self-dual allows us to discard the duality in $X^*\otimes X$. 

%
%

\begin{theorem}\label{thm:BraidFrob}
Let $(X,\mu,\eta,\Delta,\epsilon,S)$ be a commutative and cocommutative Hopf algebra. 
Then $V=X \otimes X$ has a braided Frobenius algebra structure.
\end{theorem}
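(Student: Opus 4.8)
The plan is to equip $V = X \otimes X$ with the self-dual \emph{pair of pants} Frobenius structure generated entirely by the cup and cap of Definition~\ref{def:cupcap}, and then to reduce every compatibility with the braiding $\beta$ of Lemma~\ref{lem:quantumbraid} to facts already established for $\cup$ and $\cap$. Concretely, I would take as counit and unit $\epsilon_V := \cup$ and $\eta_V := \cap$, as multiplication $\mu_V := {\mathbb 1} \otimes \cup \otimes {\mathbb 1}$ (contracting the two inner tensor factors of $V \otimes V = X^{\otimes 4}$ with $\cup$), and as comultiplication $\Delta_V := {\mathbb 1} \otimes \cap \otimes {\mathbb 1}$. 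This is exactly the pair of pants construction on $X^* \otimes X$ alluded to after Definition~\ref{def:braidFrob}, where the self-duality $X^* \cong X$ is furnished by the non-singular pairing $\cup$, which is why no explicit dual object is needed.

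First I would check that $(V, \mu_V, \eta_V, \Delta_V, \epsilon_V)$ is a Frobenius algebra. Associativity of $\mu_V$, coassociativity of $\Delta_V$, and both halves of the Frobenius compatibility condition are formal: once $\mu_V$ and $\Delta_V$ are written out on simple tensors, each side merely contracts (resp. inserts) disjoint inner factors, and the groupings agree identically with no hypothesis. The only axioms requiring input are the unit and counit conditions; for instance $\mu_V(\eta_V \otimes {\mathbb 1}) = {\mathbb 1}$ unwinds to $({\mathbb 1} \otimes \cup)(\cap \otimes {\mathbb 1}) = {\mathbb 1}$, which is precisely one of the two zig-zag identities of the switchback property of Lemma~\ref{lem:switchback}, the other unit and the two counit axioms using the remaining identity. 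This entire step is purely formal and does not involve the braiding.

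It then remains to verify the eight commutation identities of Definition~\ref{def:braidFrob}. The four (co)unit identities are \emph{verbatim} the conclusions of Lemma~\ref{lem:cupcapbraid}: the conditions on $\epsilon_V = \cup$ are $({\mathbb 1} \otimes \cup)\beta = \cup \otimes {\mathbb 1}$ and $(\cup \otimes {\mathbb 1})\beta = {\mathbb 1} \otimes \cup$, and those on $\eta_V = \cap$ are $({\mathbb 1} \otimes \cap)\beta = \cap \otimes {\mathbb 1}$ and $\beta(\cap \otimes {\mathbb 1}) = {\mathbb 1} \otimes \cap$, all already proved. For the multiplication and comultiplication identities I would argue diagrammatically, as in Figure~\ref{BF}: since $\mu_V$ is built from a single $\cup$-contraction, sliding a braided strand through the trivalent vertex amounts to sliding it past that contraction, which is licensed by the cup-commutation of Lemma~\ref{lem:cupcapbraid}, while the reorganization of the several crossings created in the process is exactly the Yang--Baxter relation for $\beta$ established in Lemma~\ref{lem:B}. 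The comultiplication identities follow from the mirror-image argument using the cap-commutation together with the YBE.

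I expect this last step to be the main obstacle. Although each ingredient (the YBE for $\beta$, and the commutation of $\beta$ with $\cup$ and with $\cap$) is in hand, combining them correctly requires careful bookkeeping of the fact that $\beta$ braids \emph{both} edges of a ribbon $V = X \otimes X$ simultaneously, and of the Sweedler indices produced by the iterated comultiplication under cocommutativity. The cleanest route is the diagrammatic one, reading the multiplication and comultiplication vertices as $\cup$ and $\cap$ bent into cups and caps, so that each required identity becomes a short sequence of planar isotopies interspersed with applications of Lemmas~\ref{lem:B} and~\ref{lem:cupcapbraid}; the commutativity and cocommutativity hypotheses on $X$ guarantee that every lemma invoked is available.
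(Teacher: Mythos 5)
Your proposal is correct and follows essentially the same route as the paper: the identical pair-of-pants data $\mu_V = {\mathbb 1}\otimes\cup\otimes{\mathbb 1}$, $\Delta_V = {\mathbb 1}\otimes\cap\otimes{\mathbb 1}$, $\eta_V=\cap$, $\epsilon_V=\cup$, with the Frobenius axioms checked formally via the switchback property and all eight braiding compatibilities reduced to Lemma~\ref{lem:cupcapbraid} (the multiplication and comultiplication cases handled diagrammatically by sliding strands past the single $\cup$- or $\cap$-contraction, exactly as in the paper's Figure~\ref{BFdouble}).
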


\begin{proof}
Since $X$ is an involutory Hopf algebra, applying Lemma~\ref{lem:quantumbraid} it follows that $X\otimes X$ has a braiding $\beta$ that is induced by the quantum heap structure of $X$. We define a product $\mu_{\otimes 2}: X^{\otimes 2} \otimes X^{\otimes 2} \rightarrow X^{\otimes 2}$ by means of $\cup$ as
$$
\mu_{\otimes 2} := \mathbb 1\otimes \cup\otimes  \mathbb 1.
$$
The coproduct $\Delta_{\otimes 2}: X^{\otimes 2} \rightarrow X^{\otimes 2} \otimes X^{\otimes 2}$ is obtained from $\cap$ by the definition
$$
\Delta_{\otimes 2} := \mathbb 1\otimes \cap \otimes \mathbb 1. 
$$
Product is 
associative since 
\begin{eqnarray*}
	\mu_{\otimes 2}\circ (\mu_{\otimes 2}\otimes \mathbb 1^{\otimes 2}) &=& (\mathbb 1\otimes \cup\otimes  \mathbb 1)\circ (\mathbb 1\otimes \cup\otimes  \mathbb 1\otimes \mathbb 1^{\otimes 2})\\
	&=& \mathbb 1\otimes  \cup \otimes \cup \otimes \mathbb 1\\
	&=& (\mathbb 1\otimes \cup\otimes  \mathbb 1) \circ (\mathbb 1^{\otimes 2}\otimes \mathbb 1\otimes \cup\otimes  \mathbb 1)\\
	&=& \mu_{\otimes 2} \circ (\mathbb 1^{\otimes 2}\otimes \mu_{\otimes 2}).
	\end{eqnarray*}
Similarly, we see that $\Delta_{\otimes 2}$ is coassociative. The fact that $\mu_{\otimes 2}$ and $\Delta_{\otimes 2}$ satisfy the Frobenius laws is seen directly, as we have that $\Delta_{\otimes 2}\circ\mu_{\otimes 2}$, $(\mathbb 1^{\otimes 2}\otimes \mu_{\otimes 2})\circ (\Delta_{\otimes 2}\otimes \mathbb 1^{\otimes 2})$ and $(\mu_{\otimes 2}\otimes \mathbb 1^{\otimes 2})\circ(\mathbb 1^{\otimes 2}\Delta_{\otimes 2})$ evaluated on simple tensors $x\otimes y\otimes z\otimes w$ all equal 
$$
\cup(y\otimes z)\cdot x\otimes \cap(1)\otimes w,
$$
where we have used $\cdot$ to separate an element of the ground ring $\mathbb k$ from elements of $X^{\otimes 4}$ to avoid confusion. 

\begin{figure}[htb]
\begin{center}
\includegraphics[width=.8in]{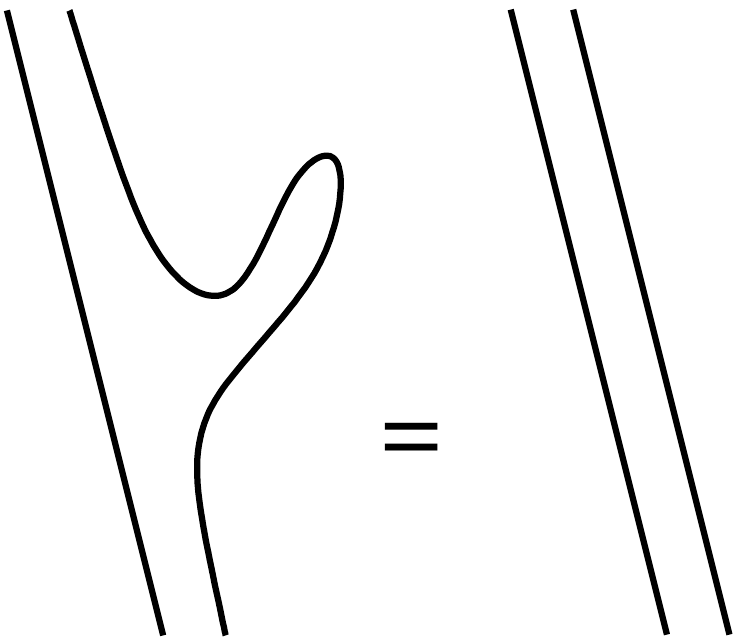}
\end{center}
\caption{The unit axiom}
\label{unit}
\end{figure}

The unit $\eta_{\otimes 2}: {\mathbb k} \rightarrow X^{\otimes 2} \otimes X^{\otimes 2}$
is defined by $\cap$. 
The unit condition follows from the switchback condition, as depicted in Figure~\ref{unit}. The counit $\epsilon_{\otimes 2}$ is defined by $\epsilon_{\otimes 2} = \cup$ and the counit condition follows similarly. 
Hence  $X\otimes X$ is endowed with a Frobenius structure and a braiding induced from the quantum heap operation.

\begin{figure}[htb]
\begin{center}
\includegraphics[width=3.5in]{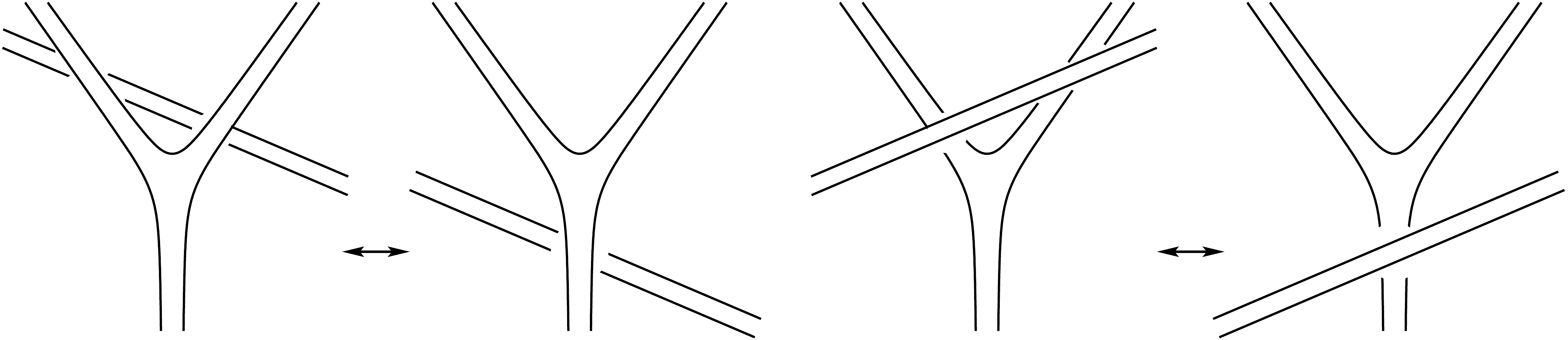}
\end{center}
\caption{Braided Frobenius conditions for a doubled Hopf algebra}
\label{BFdouble}
\end{figure}

To complete the proof we need to show that braiding and Frobenius morphisms 
commute in the sense of Definition~\ref{def:braidFrob}. 
The commutations between (co)units  and braiding follow from 
Lemma~\ref{lem:cupcapbraid} (see Figures~\ref{counitb2} through \ref{unitb1}). 
For doubled strands, the commutations between multiplication and braiding  are depicted in Figure~\ref{BFdouble}. These follow from commutations between counits  and braiding.
The commutations between comultiplication and braiding are represented by the upside down (the vertical mirror) figures of Figure~\ref{BFdouble}, and follow from 
commutations between units  and braiding.
\end{proof}

\begin{example}
{\rm
Let $X={\mathbb k}[G]$ be a group ring of a group heap $G$ 
with the TSD operation defined by linearlization of the group heap operation $T(x \otimes y \otimes z):=x y^{-1} z$ for $x,y,z \in G$. 
Endow $X$ with the Hopf algebra structure, where $\mu$ is defined by
the linearlized group multiplication,  group $\Delta(x) = x \otimes x$ for $x \in G$, 
unit defined by $\eta(1)=e \in G$ (the identity element), and counit defined by $\epsilon (x)=1$
for $x \in G$. 
The integral is defined by $\sum_{x \in G} x$ and cointegral by $e \mapsto 1$, $e \neq g \mapsto 0$. 
All conditions in Theorem~\ref{thm:BraidFrob} are checked.
The braiding is defined from $T$, and for group elements 
$\beta( (x \otimes y) \otimes (u \otimes v) ) 
= (u \otimes v)  \otimes ( x u^{-1} v \otimes y u^{-1} v )$.
Thus the braiding is the linearlization of group heap braiding as depicted in Figure~\ref{heaptypeIII}. If the group $G$ is abelian, $X$ satisfies the assumption of Theorem~\ref{thm:BraidFrob}. Moreover, so does the dual Hopf algebra  ${\mathbb k}[G]^*$.
}
\end{example}

\begin{example}
	{\rm 
Let $\mathbb k$ be a PID or a local ring of characteristic $p$. Then the truncated polynomial algebra $H = \mathbb k [X]/(X^{p^k})$ is a finitely genereated free (hence projective) Hopf algebra for any $k\geq 1$. As previously pointed out,  $H$ 
 satisfies $P(X^*)\cong {\mathbb k}$ 
since $\mathbb k$ is either a PID or a local ring. We can therefore apply Lemma~\ref{lem:switchback} and Theorem~\ref{thm:BraidFrob}, since $H$ is commutative and cocommutative. Explicitly, the algebra structure of $H$ is determined by multiplication of polynomials, the comultiplication is obtained extending $\Delta(X) = 1\otimes X + X\otimes 1$ to be an algebra homomorphism (note that it is here crucial that $H$ is truncated at a power of the characteristic of the ground ring), the counit is defined by $\epsilon (1) = 1$, $\epsilon(X) = 0$ and the antipode is given by $S(X) = -X$.  This construction can be generalized to truncated polynomial algebras with more than one indeterminate. 

We note that considering local rings gives a wider class of objects with respect to that of PID's in \cite{LS}. For instance, the ring $\Z_p[Y_1,Y_2]/(Y_1,Y_2)^2$ is a local ring that is not a PID to which the previous construction can be applied. 
}
\end{example}

\section{Twists in braided Frobenius algebras}\label{sec:Twist}

In this section we introduce  twists in  braided Frobenius algebras,  and discuss 
relations to tortile category structure and surfaces with boundary embedded in 3-space.

\begin{definition}\label{def:theta}
	{\rm 
		Let $(V, \Delta, \epsilon) $ be a finite dimensional coalgebra  over a field ${\mathbb k}$ with a TSD operation $T: V^{\otimes 3} \rightarrow V$ (Definition~\ref{def:TSD}).
		Then the operation $\theta: V \otimes V \rightarrow $ defined by 
		\begin{eqnarray*}
			\theta(x\otimes y) &=& T(x^{(1)}\otimes x^{(2)}\otimes y^{(2)})\otimes T(y^{(1)}\otimes x^{(3)}\otimes y^{(3)})
		\end{eqnarray*}
		is called a {\it twist } by $T$. 
	}
\end{definition}

\begin{remark}
	{\rm
	The twisting introduced in Definition~\ref{def:theta} is
	motivated from 
	 a ``quantum'' version of the core quandle \cite{FR} operation $(x,y) \mapsto y x^{-1} y$
	 defined on groups. In fact we have
	\begin{eqnarray*}
	x\otimes y &\mapsto& x^{(1)}S(x^{(2)})y^{(2)} \otimes y^{(1)}S(x^{(3)})y^{(3)}\\
	&=&  \epsilon (x^{(1)}) y^{(2)} \otimes y^{(1)}S(x^{(2)}) y^{(3)}\\
	&=& y^{(2)} \otimes y^{(1)}S(x)y^{(3)},
	\end{eqnarray*}
where the second term in the tensor product can be identified with the core quandle operation between $y^{(1)}$ and $x$.
}
\end{remark}

\begin{figure}[htb]
	\begin{center}
		\includegraphics[width=.5in]{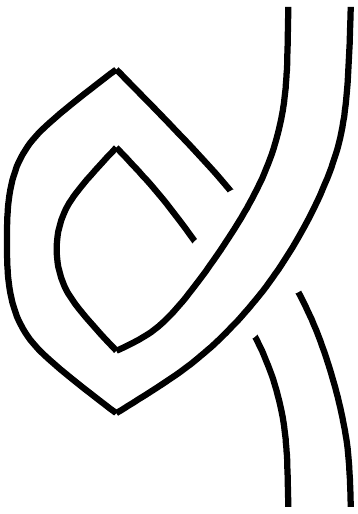}
	\end{center}
	\caption{Twisting a ribbon}
	\label{loopV}
\end{figure}

The operation $\theta$ is written by maps as follows. Fix a basis $\{ e_i : i=1, \ldots ,n\}$
for $V$, and define 
the pairing  $\vee : V \otimes V^* \rightarrow {\mathbb k} $ for the dual space $V^*$ by
$\vee (x_i \otimes x_j^*)=\delta_{i,j}$ with the Kronecker's delta, 
and copairing
$\wedge : {\mathbb k}  \rightarrow  V \otimes V^* $ by 
$\wedge (1)=\sum_{i=1}^n x_i \otimes x^*_i $. 
Then $\theta$ is written as 

$$\theta=  ({\mathbb 1}^{\otimes 2} \otimes \vee ) ( {\mathbb 1}^{\otimes 3} \otimes \vee  \otimes  {\mathbb 1}) (\beta \otimes  {\mathbb 1}^{\otimes 2} ) ( {\mathbb 1}^{\otimes 3} \otimes \wedge \otimes  {\mathbb 1}) ({\mathbb 1}^{\otimes 2} \otimes \wedge ) $$
with the braiding $\beta$ induced from $T$ 
(Lemma~\ref{lem:B}).
Diagrammatically, $\theta$ is represented by Figure~\ref{loopV}, and corresponds to 
a full twist as in the right of the figure. In the figure, the maxima and minima corresponds to $\wedge$ and
$\vee$, respectively, and indicated by such notations, to distinguish 
them from 
$\cap$ and $\cup$.

\begin{proposition} \label{prop:twist} 
	Let $(V, \Delta, \epsilon) $ be a cocommutative coalgebra  over a unital ring ${\mathbb k}$ with a TSD operation $T: V^{\otimes 3} \rightarrow V$ (Definition~\ref{def:TSD}).
	Let $\theta$ be the twist in Definition~\ref{def:theta}. 
	Then  $\theta$ commutes with the braiding $\beta$ induced from 
	$T$.
	Specifically, we have $\beta (\theta \otimes {\mathbb 1} ) = ( {\mathbb 1} \otimes \theta ) \beta$
	and $\beta  ( {\mathbb 1} \otimes \theta ) = (\theta \otimes {\mathbb 1} ) \beta$. 
\end{proposition}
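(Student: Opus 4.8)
The plan is to establish the first identity $\beta(\theta \otimes \mathbb 1) = (\mathbb 1 \otimes \theta)\beta$ and then to obtain the second, $\beta(\mathbb 1 \otimes \theta) = (\theta \otimes \mathbb 1)\beta$, from the vertically mirrored diagram. Diagrammatically the first identity asserts that the full twist of Figure~\ref{loopV} on the left ribbon can be slid across a crossing of two ribbons so as to become a full twist on the right ribbon. I would realize this slide concretely by evaluating both composites on a simple tensor, using that $\theta$ and the braiding $\beta$ of Lemma~\ref{lem:B} are both given by explicit Sweedler formulas.

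Writing a ribbon as $a \otimes b$ and the second ribbon as $y \otimes z$, I would first compute $\beta(\theta \otimes \mathbb 1)$: twisting the first ribbon produces $P \otimes Q$ with $P = T(a^{(1)} \otimes a^{(2)} \otimes b^{(2)})$ and $Q = T(b^{(1)} \otimes a^{(3)} \otimes b^{(3)})$, and braiding this over $y \otimes z$ leaves $y^{(1)} \otimes z^{(1)}$ in front, followed by the two nested terms $T(P \otimes y^{(21)} \otimes z^{(21)})$ and $T(Q \otimes y^{(22)} \otimes z^{(22)})$. Dually, for $(\mathbb 1 \otimes \theta)\beta$ I would braid first, obtaining $R = T(a \otimes y^{(21)} \otimes z^{(21)})$ and $S = T(b \otimes y^{(22)} \otimes z^{(22)})$, and then twist the resulting ribbon $R \otimes S$. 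After coassociative relabeling the two outputs carry the same passed-through factors $y^{(1)} \otimes z^{(1)}$ in front, so the content of the proposition is the equality of the remaining two tensor factors.

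To match them I would apply the ternary self-distributivity of $T$ (Definition~\ref{def:TSD}) to the nested terms $T(T(\cdots)\otimes\cdots)$ coming from the left-hand side, and the coalgebra-morphism property of $T$, namely $\Delta(T(a\otimes b\otimes c)) = T(a^{(1)}\otimes b^{(1)}\otimes c^{(1)})\otimes T(a^{(2)}\otimes b^{(2)}\otimes c^{(2)})$, to the inner comultiplications $\Delta R$ and $\Delta S$ produced by the twist on the right-hand side. Each TSD expansion of a left-hand factor then matches, term for term, a coalgebra-morphism expansion of the corresponding right-hand factor. I expect the main obstacle to lie exactly here: after these expansions the descendants of $y^{(2)}$ and $z^{(2)}$ (the over-passing strand) are distributed into different slots on the two sides, and reconciling them requires repeated use of the cocommutativity of $\Delta$ together with coassociativity to permute these Sweedler factors into agreement. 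Getting this index bookkeeping right --- and invoking TSD at precisely the slot where the coalgebra-morphism expansion of $R$ and $S$ appears --- is the delicate step; diagrammatically it corresponds to passing the self-crossing of the curl through the external crossing by the Yang-Baxter equation of Lemma~\ref{lem:B} and then pulling the duality minima and maxima $\vee, \wedge$ past the over-strand.
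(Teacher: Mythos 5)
Your treatment of the first identity $\beta(\theta\otimes\mathbb 1)=(\mathbb 1\otimes\theta)\beta$ follows the paper's route: evaluate both composites on simple tensors, use that $T$ is a coalgebra morphism to expand the comultiplications of the braided outputs, and then match the two sides by applying ternary self-distributivity twice together with cocommutative reshuffling of the Sweedler indices of the over-strand. That half is sound.

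The gap is your claim that the second identity $\beta(\mathbb 1\otimes\theta)=(\theta\otimes\mathbb 1)\beta$ follows from ``the vertically mirrored diagram.'' The two identities are not mirror images of one another, because the braiding is asymmetric: in $\beta$ the over-strand is comultiplied and acts on the under-strand via $T$, while the under-strand is merely transported. The first identity therefore concerns a curl on the strand being acted upon, whereas the second concerns a curl on the acting strand, and these are genuinely different computations. Formally, transposing the first identity only yields $\beta^{-1}(\mathbb 1\otimes\theta)=(\theta\otimes\mathbb 1)\beta^{-1}$, i.e.\ the second identity for $\beta^{-1}$ rather than for $\beta$; the two coincide only when the braiding is involutive, which it is not here. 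The paper's proof of the second identity reduces it to the separate equation
$$T(x\otimes T(z^{(1)}\otimes z^{(2)}\otimes w^{(2)})\otimes T(w^{(1)}\otimes z^{(3)}\otimes w^{(3)})) = T(x\otimes z\otimes w),$$
which says that twisting the over-strand pair is invisible to the element it acts on. Establishing this requires not only TSD, the counit axiom and cocommutativity, but also the \emph{invertibility} of $T$ (in the sense $T(T(x\otimes y^{(2)}\otimes z^{(2)})\otimes z^{(1)}\otimes y^{(1)})=\epsilon(y)\epsilon(z)\cdot x$), an ingredient that appears nowhere in your argument. Without this step your proof establishes only half of the proposition.
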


\begin{proof}
On simple tensors we have 
\begin{eqnarray*}
\beta (\theta \otimes {\mathbb 1} )(x\otimes y \otimes z\otimes w) &=& \beta (T(x^{(1)}\otimes x^{(2)}\otimes y^{(2)})\otimes T(y^{(1)}\otimes x^{(3)}\otimes y^{(3)})\otimes z\otimes w )\\
&=& z^{(1)}\otimes w^{(1)} \otimes T(T(x^{(1)}\otimes x^{(2)}\otimes y^{(2)})\otimes z^{(2)}\otimes w^{(2)}) 
\\
&& \hspace{0.5cm}\otimes  T(T(y^{(1)}\otimes x^{(3)}\otimes y^{(3)})\otimes z^{(3)}\otimes w^{(3)}),
\end{eqnarray*}
and also 
\begin{eqnarray*}
( {\mathbb 1} \otimes \theta ) \beta(x\otimes y \otimes z\otimes w) &=&
({\mathbb 1} \otimes \theta) 
(  z^{(1)}\otimes w^{(1)} \otimes T(x\otimes z^{(2)}\otimes w^{(2)}) \otimes T(y\otimes z^{(3)}\otimes w^{(3)}) ) \\
&=& z^{(1)}\otimes w^{(1)} \otimes T(T(x^{(1)}\otimes z^{(21)}\otimes w^{(21)})\otimes T(x^{(2)}\otimes z^{(22)}\otimes w^{(22)}) 
\\
&& \hspace{0.5cm} \otimes T(y^{(2)}\otimes z^{(32)}\otimes w^{(32)}))\otimes T(T(y^{(1)}\otimes z^{(31)}\otimes w^{(31)}) 
\\
&& \hspace{0.5cm} \otimes T(x^{(3)}\otimes z^{(23)}\otimes w^{(23)})\otimes T(y^{(3)}\otimes z^{(33)}\otimes w^{(33)})),
\end{eqnarray*}
where the fact that, by definition, $T$ is a coalgebra morphism has been applied.
Applying cocommutativity of $\Delta$ we can rearrange the $z$ and $w$ terms in such a way that $\beta (\theta \otimes {\mathbb 1} )(x\otimes y \otimes z\otimes w) $ and $( {\mathbb 1} \otimes \theta ) \beta(x\otimes y \otimes z\otimes w)$ differ by an application of the TSD condition of $T$ utilized twice. This shows the equality $\beta (\theta \otimes {\mathbb 1} ) = ( {\mathbb 1} \otimes \theta ) \beta$. 

Let us now consider the equation $\beta  ( {\mathbb 1} \otimes \theta ) = (\theta \otimes {\mathbb 1} ) \beta$. For the LHS we have
\begin{eqnarray*}
\beta  ( {\mathbb 1} \otimes \theta ) (x\otimes y \otimes z\otimes w)  &=& T(z^{(11)}\otimes z^{(21)}\otimes w^{(21)})\otimes T(w^{(11)}\otimes z^{(31)}\otimes w^{(31)})\\
&&\hspace{0.5cm} \otimes T(x\otimes T(z^{(12)}\otimes z^{(22)}\otimes w^{(22)}) \otimes T(w^{(12)}\otimes z^{(32)}\otimes w^{(32)})\\
&&\hspace{0.5cm} \otimes T(y\otimes T(z^{(13)}\otimes z^{(23)}\otimes w^{(23)}) \otimes T(w^{(13)}\otimes z^{(33)}\otimes w^{(33)}),
\end{eqnarray*}
while for the RHS we have
\begin{eqnarray*}
(\theta \otimes {\mathbb 1} ) \beta (x\otimes y \otimes z\otimes w) &=& T(z^{(11)}\otimes z^{(12)}\otimes w^{(12)})\otimes T(w^{(11)}\otimes z^{(13)}\otimes w^{(13)})\\
&&\hspace{0.5cm} \otimes T(x\otimes z^{(2)}\otimes w^{(2)})\otimes T(y\otimes z^{(3)}\otimes w^{(3)}).
\end{eqnarray*}
To complete the proof we see that it is enough to show the equality 
\begin{eqnarray}
T(x\otimes T(z^{(1)}\otimes z^{(2)}\otimes w^{(2)})\otimes T(w^{(1)}\otimes z^{(3)}\otimes w^{(3)})) &=& T(x\otimes z\otimes w). \label{eqn:slideloop}
\end{eqnarray}
We have
\begin{eqnarray*}
\lefteqn{T(x\otimes T(z^{(1)}\otimes z^{(2)}\otimes w^{(2)})\otimes T(w^{(1)}\otimes z^{(3)}\otimes w^{(3)}))}\\
&=& \epsilon(z^{(2)})\epsilon(w^{(2)})\cdot T(x\otimes T(z^{(1)}\otimes z^{(3)}\otimes w^{(3)})\otimes T(w^{(1)}\otimes z^{(4)}\otimes w^{(4)}))\\
&=& T(T(T(x\otimes w^{(3)}\otimes z^{(3)})\otimes z^{(2)}\otimes w^{(2)})\otimes T(z^{(1)}\otimes z^{(4)}\otimes w^{(4)})\otimes T(w^{(1)}\otimes z^{(5)}\otimes w^{(5)})),
\end{eqnarray*}
where the first equality uses the definition of counit $\epsilon$, and the second equality makes use of the invertibility condition of $T$. Let us now apply the TSD property of $T$ to the terms $T(x\otimes w^{(2)}\otimes z^{(2)})$, $z^{(1)}$, $w^{(1)}$, $z^{(3)}$ and $w^{(3)}$, where we set 
$T(x\otimes w^{(3)}\otimes z^{(3)}) = q$ 
for convenience. We get 
\begin{eqnarray*}
	\lefteqn{T(T(q\otimes z^{(2)}\otimes w^{(2)})\otimes T(z^{(1)}\otimes z^{(4)}\otimes w^{(4)})\otimes T(w^{(1)}\otimes z^{(5)}\otimes w^{(5)}))  } \\
	&&=T(T(q\otimes z^{(2)}\otimes w^{(2)})\otimes z^{(1)}\otimes w^{(1)})
	 =\epsilon(z^{(1)})\epsilon(w^{(1)})\cdot T(x\otimes z^{(2)}\otimes w^{(2)})
	= T(x\otimes z\otimes w),
	\end{eqnarray*}
where invertibility of $T$, as well as cocommutativity of $\Delta$, has been used in the second equality. 
This shows that Equation~(\ref{eqn:slideloop}) holds. 
\end{proof}

\begin{figure}[htb]
	\begin{center}
		\includegraphics[width=1.5in]{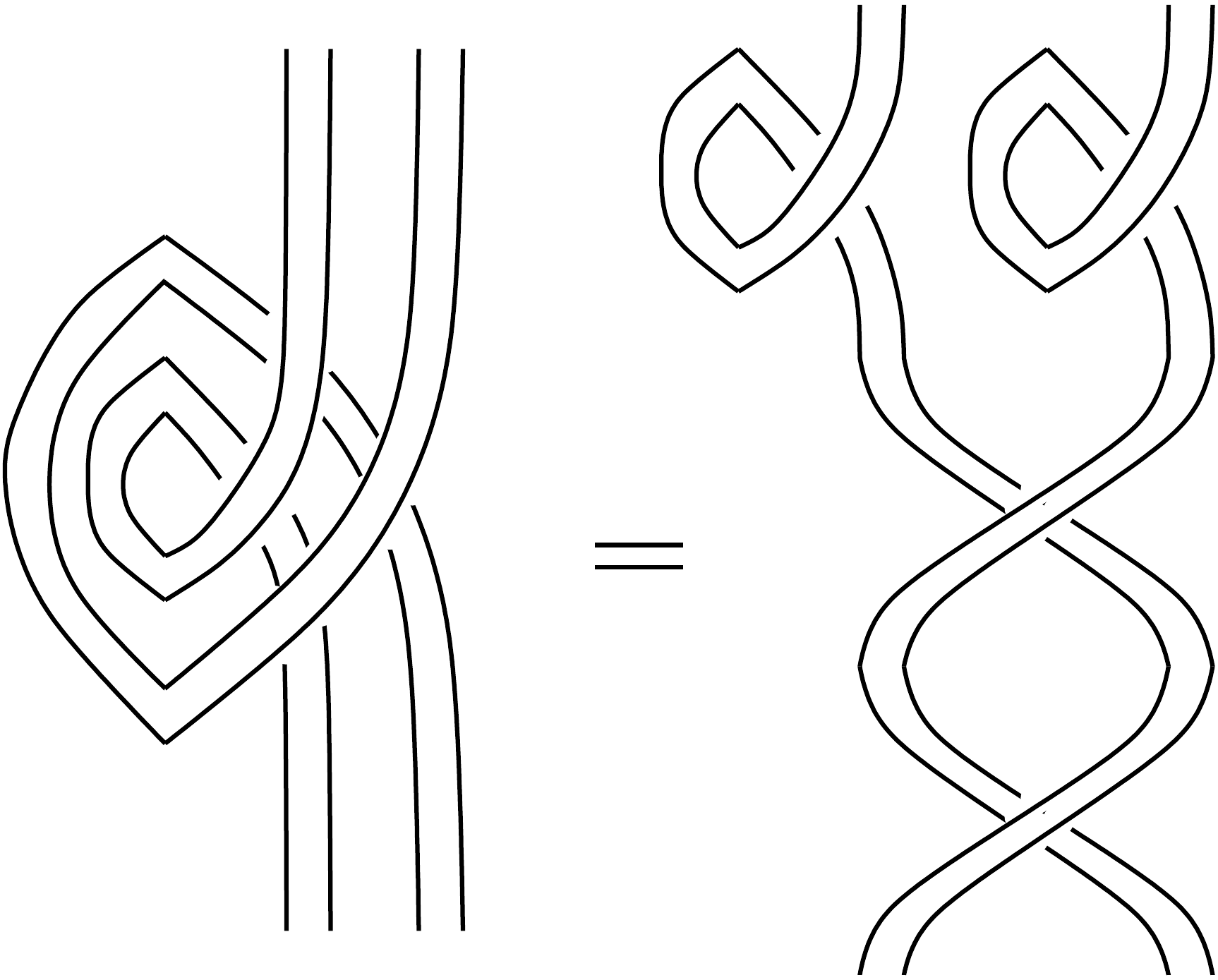}
	\end{center}
	\caption{Twisting a doubled ribbon}
	\label{loopdouble}
\end{figure}

\begin{remark} \label{rem:tortile}
	{\rm
		Here we discuss relations to the {\it tortile category}.
		A braided monoidal category is called {\it tortile} \cite{JS} (or {\it ribbon} \cite{HV}) 
		if there is a morphism $\theta_X$ called a {\it twist}  for every object $X$
		such that $\theta_{X, Y} = \beta_{Y, X} \beta_{X, Y} ( \theta_X \otimes \theta_Y) $ for all objects $X, Y$, where $\beta$ denotes the braiding.
		
		Let $(V, \Delta, \epsilon) $ be a finite dimensional coalgebra  over a field ${\mathbb k}$ with a TSD operation $T: V^{\otimes 3} \rightarrow V$ (Definition~\ref{def:TSD}).
		Then Proposition~\ref{prop:twist} implies that the subcategory generated by $V$ in the category of 
		braided monoidal category of finite dimensional coalgebras with TSDs forms a tortile category.
		The twist  $\theta_{\otimes k}$ on $V^{\otimes k}$ is defined by parallel loops, that are defined by taking $k$-fold parallel ribbons. The case $k=2$ is depicted in Figure~\ref{loopdouble} left.
		The equality 
		$\theta_{V, V} = \beta_{V, V} \beta_{V, V} ( \theta_V \otimes \theta_V)$
		is indicated in the figure.
		The fact that full twist of parallel strings form a tortile category is pointed out in \cite{JS}.
		In \cite{JS}  the twists are defined by parallel loops, that topologically  correspond to full twists 
		of parallel strings, using dual spaces. Thus the construction of this twists are obtained by
		applying the twists in \cite{JS} to braiding defined by TSD operations on coalgebras.
	}
\end{remark}

\begin{figure}[htb]
	\begin{center}
		\includegraphics[width=2.2in]{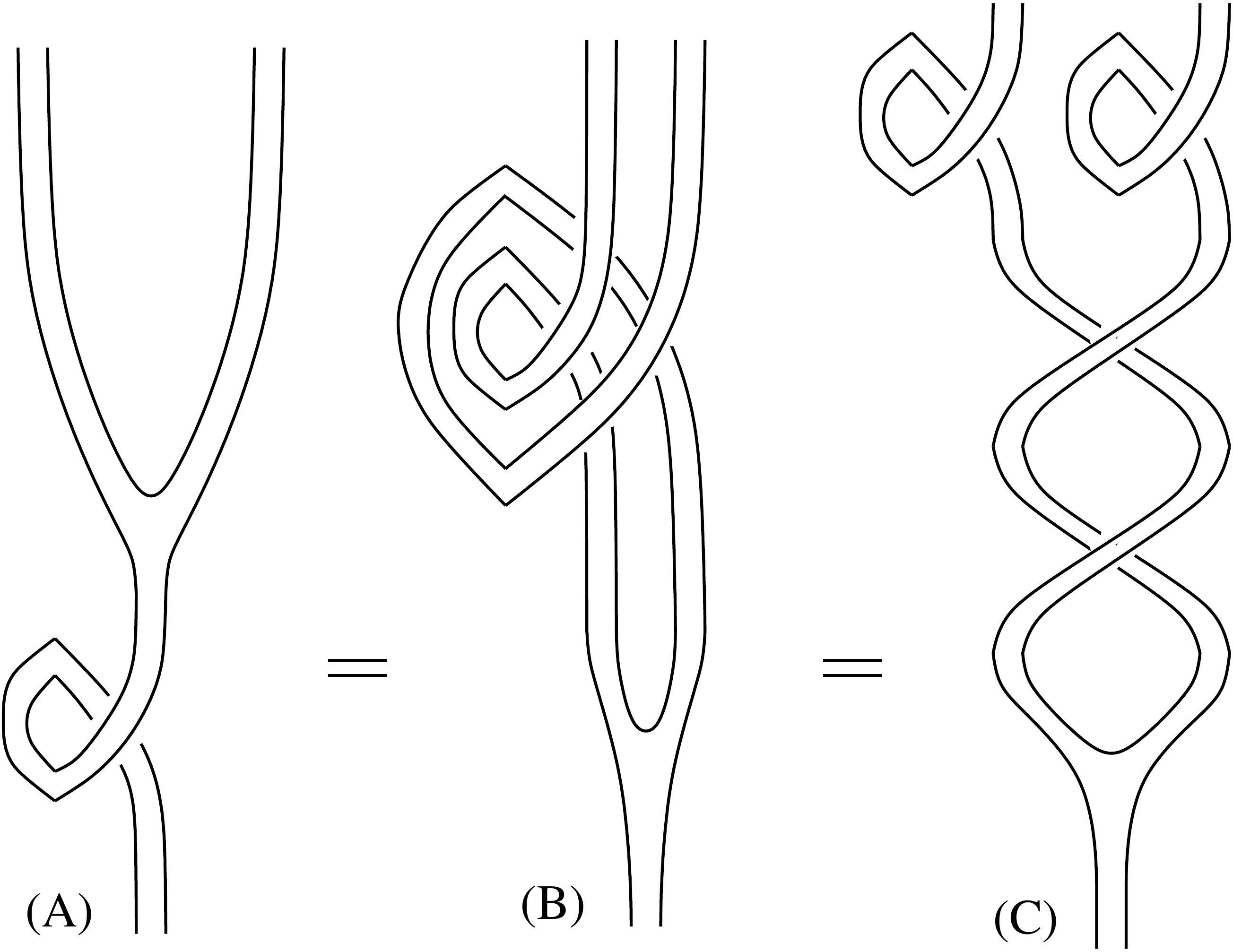}
	\end{center}
	\caption{Commutation between a twist and multiplication}
	\label{loopmultiV}
\end{figure}

\begin{proposition}	
Let $X$ be as in Theorem~\ref{thm:BraidFrob}, and let $V = X\otimes X$ denote the associated braided Frobenius structure on the doubled vector space. 
	Let $\theta$ be the twist in Definition~\ref{def:theta}. 
Then the   twist  $\theta$ commutes with the multiplication and comultiplication. This means,  with notations as in Remark~\ref{rem:tortile}, that $\theta_V \, \mu_{\otimes 2} =  \mu_{\otimes 2} \, \theta_{  V , V }$
and  $ \Delta_{\otimes 2} \theta_V = \theta_{V,V} \Delta_{\otimes 2}$ hold.
\end{proposition}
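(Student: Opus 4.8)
The plan is to prove both equalities diagrammatically, as in Figure~\ref{loopmultiV}, by showing that a full twist on the trunk of the pair-of-pants slides down onto its two legs; conceptually this is just naturality of the twist with respect to the morphisms $\mu_{\otimes 2}$ and $\Delta_{\otimes 2}$, but since $\theta$ is defined by the explicit formula of Definition~\ref{def:theta} rather than postulated as ribbon structure, this naturality must be verified. First I would invoke Remark~\ref{rem:tortile} to replace $\theta_{V,V}$ by its tortile decomposition $\theta_{V,V} = \beta_{V,V}\beta_{V,V}(\theta_V \otimes \theta_V)$, so that the right-hand side of $\theta_V\,\mu_{\otimes 2} = \mu_{\otimes 2}\,\theta_{V,V}$ is written purely through the braiding $\beta$, the single twist $\theta_V$, and $\mu_{\otimes 2}$. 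Because the Frobenius algebra $V$ is self-dual, I would identify the abstract duality $\vee,\wedge$ of the map form of $\theta$ with the Frobenius pairing and copairing $\cup,\cap$, so that both sides become diagrams built only from $\beta$, $\cup$, and $\cap$.

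With everything expressed through $\beta,\cup,\cap$, I would carry out the slide in the order suggested by Figure~\ref{loopmultiV}. Proposition~\ref{prop:twist} lets the twist commute past the crossings contributed by the factor $\beta_{V,V}\beta_{V,V}$, and Lemma~\ref{lem:cupcapbraid} lets $\cup$ (hence $\mu_{\otimes 2} = \mathbb{1}\otimes\cup\otimes\mathbb{1}$) pass through the braiding, collecting the remaining loop onto the merged strand. The decisive step is then the slide-loop identity~(\ref{eqn:slideloop}), namely $T(x \otimes T(z^{(1)} \otimes z^{(2)} \otimes w^{(2)}) \otimes T(w^{(1)} \otimes z^{(3)} \otimes w^{(3)})) = T(x \otimes z \otimes w)$, established inside Proposition~\ref{prop:twist}: it absorbs the surviving loop and turns the twisted merge into the plain one, producing exactly $\theta_V\,\mu_{\otimes 2}$. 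The comultiplication equality $\Delta_{\otimes 2}\,\theta_V = \theta_{V,V}\,\Delta_{\otimes 2}$ then follows by reading the same diagram upside down, with $\cap$ and coassociativity in place of $\cup$ and associativity, the switchback property of Lemma~\ref{lem:switchback} guaranteeing that the two readings agree.

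I expect the main obstacle to be the bookkeeping forced by the doubling $V = X\otimes X$: each ribbon carries two $X$-edges, so every crossing and loop must be tracked on both edges, and the self-duality identification of $\vee,\wedge$ with $\cup,\cap$ must be applied consistently edge by edge. Turning the heuristic ``a full twist slides through a pair-of-pants'' into a rigorous chain of the three cited moves amounts to checking that the Sweedler indices generated by $\theta_{V,V}$ match, after cocommutativity, precisely those consumed by~(\ref{eqn:slideloop}); this index matching, rather than any single conceptual step, is where the real work lies.
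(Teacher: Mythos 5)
Your plan correctly identifies the slide-loop identity~(\ref{eqn:slideloop}) as a relevant algebraic ingredient, but the reduction you propose has a genuine gap at the step where you ``let $\cup$ (hence $\mu_{\otimes 2}$) pass through the braiding'' via Lemma~\ref{lem:cupcapbraid}. That lemma establishes the counit/unit commutations $(\mathbb 1\otimes\cup)\beta=\cup\otimes\mathbb 1$ and $(\cup\otimes\mathbb 1)\beta=\mathbb 1\otimes\cup$, in which $\cup$ consumes an \emph{entire} tensor factor $V=X\otimes X$ adjacent to the crossing. In $\mu_{\otimes 2}=\mathbb 1\otimes\cup\otimes\mathbb 1$ the cup instead pairs the two \emph{inner} $X$-strands, one from each copy of $V$; that is, the two ribbons being merged are precisely the two ribbons crossing each other in $\beta_{V,V}\beta_{V,V}$. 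Neither Lemma~\ref{lem:cupcapbraid}, nor the passcup property, nor any axiom of Definition~\ref{def:braidFrob} permits a multiplication vertex to slide through a crossing of its \emph{own two inputs}: a one-line computation with $\beta((x\otimes y)\otimes(z\otimes w))=(z^{(1)}\otimes w^{(1)})\otimes(xS(z^{(2)})w^{(2)}\otimes yS(z^{(3)})w^{(3)})$ shows $\mu_{\otimes 2}\beta_{V,V}\neq\mu_{\otimes 2}$ in general. The topologically correct move is that the full twist passes through the trivalent vertex \emph{as a single unit}, and this does not decompose into the crossing-by-crossing slides your cited lemmas supply. Relatedly, the decomposition $\theta_{V,V}=\beta_{V,V}\beta_{V,V}(\theta_V\otimes\theta_V)$ that you take as a starting point is only ``indicated in the figure'' in Remark~\ref{rem:tortile}, so building on it quietly assumes a statement of comparable difficulty to the one being proved.

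The paper's proof avoids all of this by direct computation: it evaluates both $\theta_V\,\mu_{\otimes 2}$ and $\mu_{\otimes 2}\,\theta_{V,V}$ on a simple tensor $x\otimes y\otimes z\otimes w$ in Sweedler notation and shows that both equal $\gamma(yS(z))\cdot w^{(2)}\otimes w^{(1)}S(x)w^{(3)}$, using the counit axiom repeatedly and the defining property of the cointegral $\gamma$ twice; the comultiplication identity is handled symmetrically. Note that the cointegral property is essential there, whereas identity~(\ref{eqn:slideloop}) involves only $T$ and says nothing about $\gamma$; so even if your diagrammatic slides were all justified, you would still need a separate cup-analogue of~(\ref{eqn:slideloop}) in which the extra $T$-factors are absorbed \emph{inside} $\gamma(\cdots)$. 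As written, the ``index matching'' you defer to the end is not bookkeeping but the actual content of the proof.
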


The commutation between the twist and multiplication is depicted in the left equality $ (A) = (B)$  of Figure~\ref{loopmultiV}. The right equality $(B)=(C)$  is a consequence of Figure~\ref{loopdouble}.
We note that the resulting equality $ (A)=(C) $ corresponds 
diagrammatically to twisting the trivalent vertex by one full twist.

\begin{proof}
We verify equality $\theta_V \, \mu_{\otimes 2}  =  \mu_{\otimes 2}  \, \theta_{  V , V }$ on simple tensors $x\otimes y \otimes z\otimes w$. For the LHS we have
\begin{eqnarray*}
\theta_V \, \mu_{\otimes 2}  (x\otimes y\otimes z\otimes w) &=& \gamma (yS(z)) \cdot w^{(2)}\otimes w^{(1)}S(x)w^{(3)}.
\end{eqnarray*}
The RHS is given as
\begin{eqnarray*}
\mu_{\otimes 2}  \, \theta_{  V , V }(x\otimes y\otimes z\otimes w) &=& \gamma(y^{(1)}S(x^{(3)}) y^{(3)}S(z^{(3)})w^{(3)}S(w^{(4)})z^{(4)}S(y^{(4)})x^{(4)}S(z^{(1)}))\\
&& \cdot x^{(1)}S(x^{(2)})y^{(2)}S(z^{(2)})w^{(2)}\otimes w^{(1)}S(x^{(5)})y^{(5)}S(z^{(5)})w^{(5)}\\
&=&  \gamma (y^{(1)}S(z^{(1)}))\cdot y^{(2)}S(z^{(2)})w^{(2)}\otimes w^{(1)}S(x)y^{(3)}S(z^{(3)})w^{(3)}\\
&=& \gamma (yS(z))\cdot w^{(2)}\otimes w^{(1)}S(x)w^{(3)},
\end{eqnarray*}
where the first equality is obtained by unraveling the definitions, the second equality is a multiple application of the counit axiom, and the third equality follows by applying the definition of cointegral $\gamma$ twice. Equality $ \Delta_{\otimes 2}  \theta_V = \theta_{V,V} \Delta_{\otimes 2} $ is proven on 
simple 
 tensors in a similar fashion.
%
\end{proof}

\begin{figure}[htb]
	\begin{center}
		\includegraphics[width=1.2in]{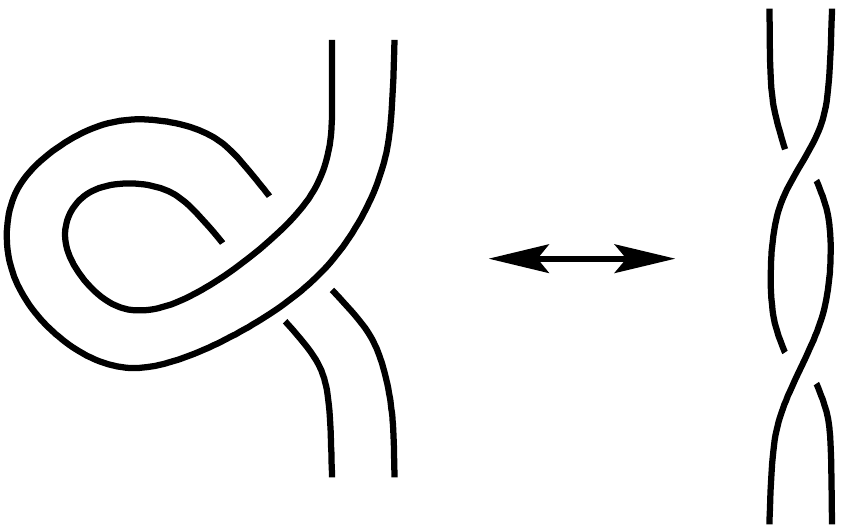}
	\end{center}
	\caption{Twisting a ribbon by a loop}
	\label{loop}
\end{figure}

\begin{remark}
	{\rm 
		
		For the braided Frobenius algebra $V$ constructed in Theorem~\ref{thm:BraidFrob}, a twist $\Theta$ can be defined using $\cap$ and $\cup$ instead of $\wedge$ and $\vee$ as depicted in Figure~\ref{loop} left. 
		Specifically, 
		$$\Theta=  ({\mathbb 1}^{\otimes 2} \otimes \cup ) ( {\mathbb 1}^{\otimes 3} \otimes \cup  \otimes  {\mathbb 1}) (\beta \otimes  {\mathbb 1}^{\otimes 2} ) ( {\mathbb 1}^{\otimes 3} \otimes \cap  \otimes  {\mathbb 1}) ({\mathbb 1}^{\otimes 2} \otimes \cap ) $$ with the braiding $\beta$ induced from $T$ 
		(Lemma~\ref{lem:B}).
		Since all maps that appear in this formula commute with the braiding $\beta$ from earlier lemmas,
		$\Theta$ commute with $\beta$. By the same argument as Remark~\ref{rem:tortile}, 		we obtain a tortile category from $V$.
		Similarly, $\Theta$ commutes with $\mu$ and $\Delta$. A sketch proof of 
		the commutation between $\mu$ and $\Theta$ is depicted in Figure~\ref{loopmultiproof}.
	}
\end{remark}

\begin{figure}[htb]
	\begin{center}
		\includegraphics[width=3.5in]{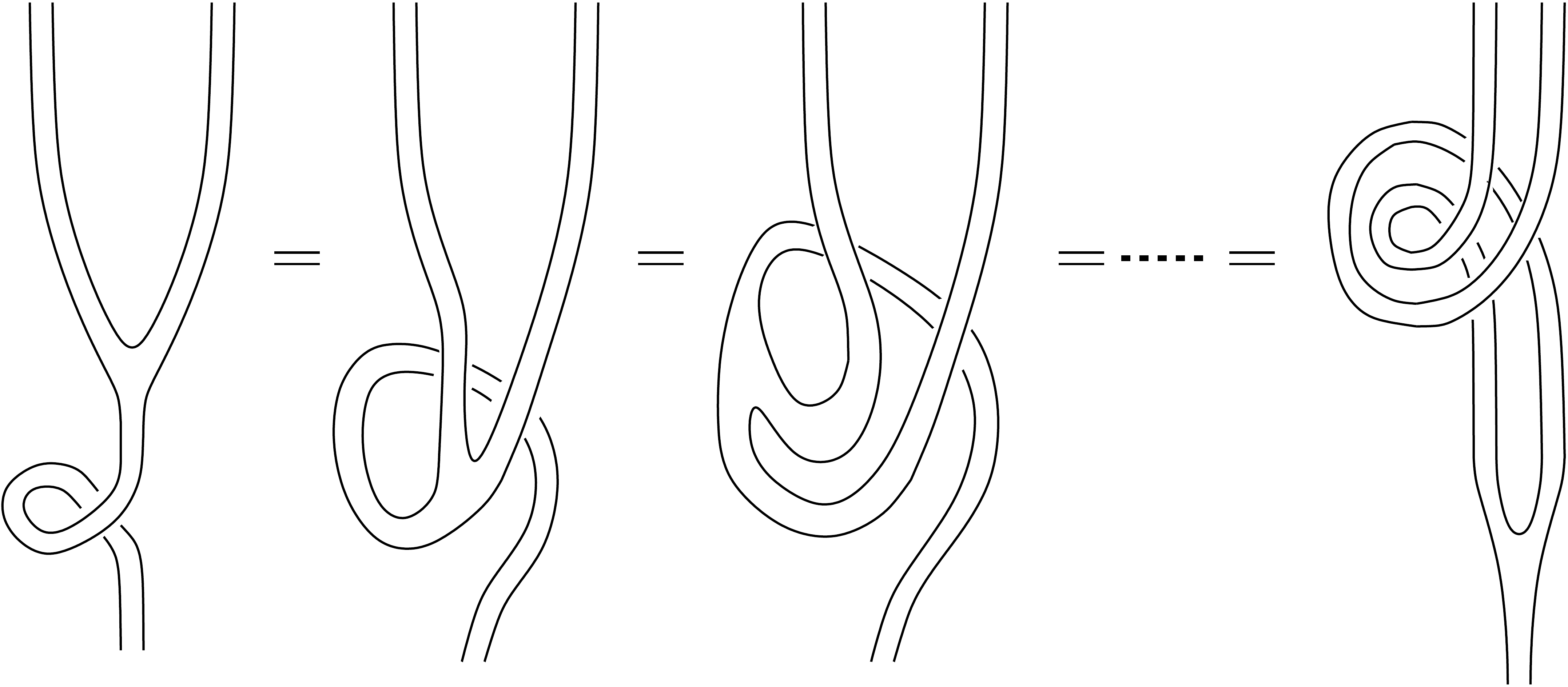}
	\end{center}
	\caption{Sketch picture proof of commutation}
	\label{loopmultiproof}
\end{figure}


We close the paper with remarks on invariants of embedded surfaces with boundary.
It is of interest to find invariants of compact orientable surfaces with boundary represented by ribbon graph diagrams,
as considered in \cite{Matsu}, in a way analogous to quantum invariants using braided Frobenius algebras.
In this approach, a height function is fixed on the plane, and building blocks of diagrams consist of cups and caps in addition to crossings and trivalent vertices.
Although a complete  set of moves for  ribbon graph diagrams for certain embedded surfaces 
was given in \cite{Matsu},
  height functions were not considered.
 It is desirable to have a list of additional moves. For example, the passcup move and passcap move (the upside down of passcup) are such moves, and they are satisfied by braided Frobenius algebras constructed in this paper. Another move depicted in Figure~\ref{capmult} is also satisfied from Frobenius algebra axioms. Although most moves in \cite{Matsu} for orientable surfaces (without half twists), with appropriate choices of height functions, are satisfied by our resulting braided Frobenius algebras, it is not clear at this time whether the equation corresponding to the move depicted in Figure~\ref{cancelpair} is satisfied, in general, under our construction. 
However, it may be satisfied by some specific examples,
and may provide invariants for such surfaces.

\begin{figure}[htb]
	\begin{center}
		\includegraphics[width=1.8in]{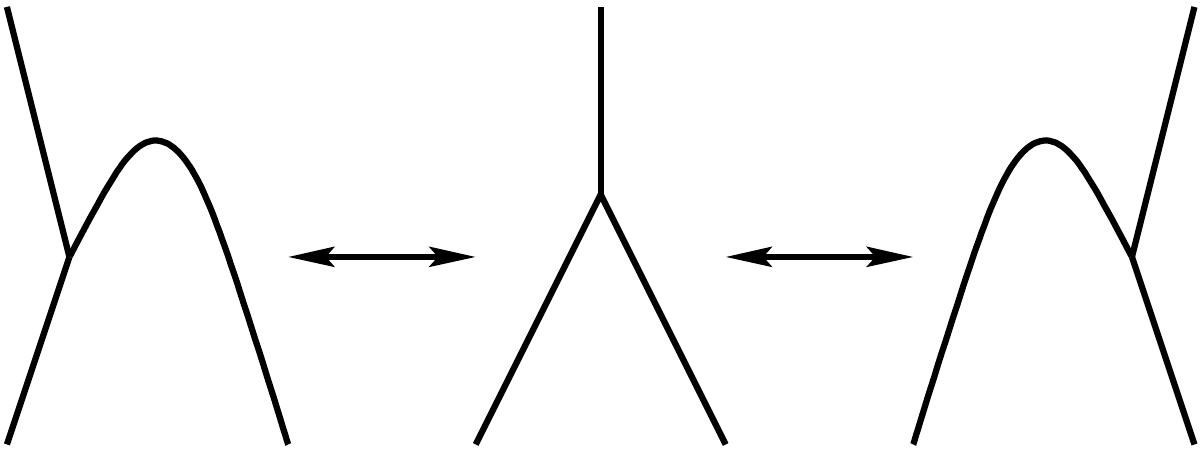}
	\end{center}
	\caption{Conversion of $\mu$ to $\Delta$ through $\cap$}
	\label{capmult}
\end{figure}

\begin{figure}[htb]
	\begin{center}
		\includegraphics[width=1.2in]{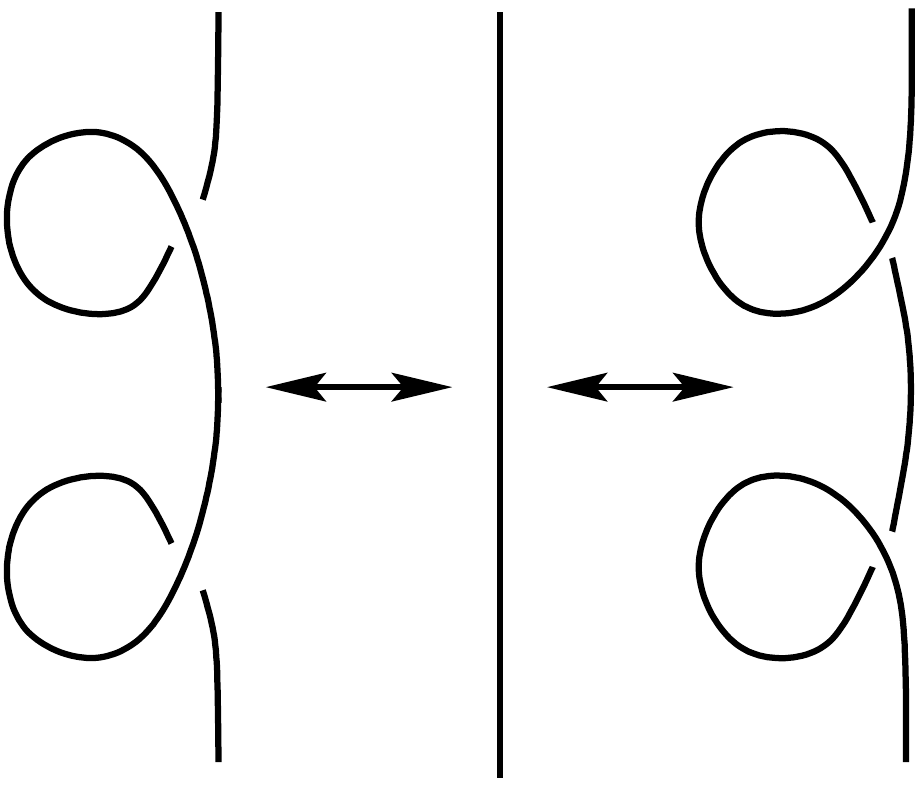}
	\end{center}
	\caption{Canceling a pair of loops}
	\label{cancelpair}
\end{figure}

For non-orientable surfaces, 
ribbon graph diagrams~\cite{Matsu} contain 
half-twists, and there is a  move of twisting a vertex  as indicated in Figure~\ref{twistvertdble},
that involve half twists of ribbons merging at a vertex.
From the topological correspondence of the twist $\theta$ to a full twists as in Figure~\ref{loop},
such a hypothetical half twist, which we denote by $\sqrt{\theta}$, would be required to 
satisfy $\sqrt{\theta} \circ \sqrt{\theta} =\theta$ (thus the notation). 
We have not found such a morphism in braided Frobenius algebras constructed in Theorem~\ref{thm:BraidFrob}, and raise a question:
For the twists ($\theta$ and $\Theta$) defined in this section for 
the  braided Frobenius algebras constructed in Theorem~\ref{thm:BraidFrob},
are there  half twists $\sqrt{\theta}$ and $\sqrt{\Theta}$ ?
We point out  a curious fact that the composition of a half-twist of a vertex in Figure~\ref{twistvertdble}
twice is a full twist of a vertex represented by Figure~\ref{loopmultiV}, which is satisfied by 
the braided Frobenius algebras constructed in this paper.

\begin{figure}[htb]
	\begin{center}
		\includegraphics[width=2.2in]{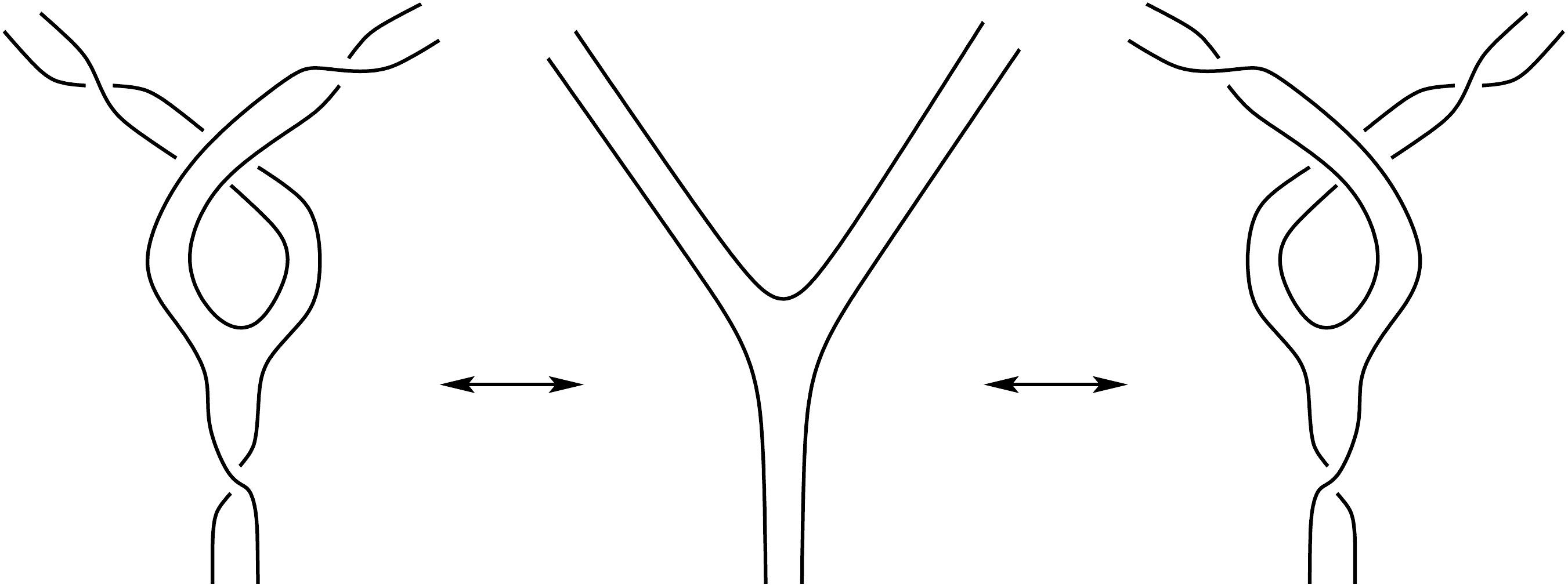}
	\end{center}
	\caption{Twisting a vertex of a ribbon}
	\label{twistvertdble}
\end{figure}

\smallskip

\noindent
{\bf Acknowledgements.} We are thankful to J. Scott Carter and Atsushi Ishii for valuable information.
MS was supported in part by NSF DMS-1800443.
EZ was 
supported by the Estonian Research Council via the Mobilitas Pluss scheme, grant MOBJD679.

\end{document}